\theoremstyle{plain}
\newtheorem{theorem}{Theorem}[section]
\newtheorem{lemma}[theorem]{Lemma}
\newtheorem{proposition}[theorem]{Proposition}
\theoremstyle{definition}
\newtheorem{definition}[theorem]{Definition}
\newtheorem{example}[theorem]{Example}
\newtheorem{remark}[theorem]{Remark}
\numberwithin{equation}{section}
\newcommand{\C}{\mathbb{C}}
\newcommand{\Z}{\mathbb{Z}}
\newcommand{\ep}{\varepsilon}
\newcommand*\bigcdot{\mathpalette\bigcdot@{.5}}
\newcommand*\bigcdot@[2]{\mathbin{\vcenter{\hbox{\scalebox{#2}{$\m@th#1\bullet$}}}}}
\newcommand{\Lie}{{\text{Lie}}}
\newcommand{\leqnomode}{\tagsleft@true\let\veqno\@@leqno}
\newcommand{\reqnomode}{\tagsleft@false\let\veqno\@@eqno}
\newcommand{\flag}{{\mathcal{F} \ell}}
\newcommand{\Lam}[2]{{\Lambda^{{(#1)}}_{{#2}}}}
\newcommand{\A}[2]{{A^{{(#1)}}_{{#2}}}}
\newcommand{\X}[2]{{X^{{(#1)}}_{{#2}}}}
\newcommand{\xii}[2]{{\xi^{{(#1)}}_{{#2}}}}
\newcommand{\bolda}[3]{{\mathbf{a}^{{(#1)}}_{{{#2}}, {{#3}}}}}
\newcommand{\Cstar}{{\C^{\ast}}}
\newcommand{\defi}[1]{{\textit{#1}}}
\begin{document}

\title[Generic torus orbit closures in flag {B}ott manifolds]{Generic torus orbit closures \\ in flag {B}ott manifolds}

\author{Eunjeong Lee}
\address{Center for Geometry and Physics, Institute for Basic Science (IBS), Pohang 37673, Korea}
\email{eunjeong.lee@ibs.re.kr}

\author{Dong Youp Suh}
\address{Department of Mathematical Sciences
	\\ KAIST \\ 291 Daehak-ro Yuseong-gu \\ Daejeon 34141 \\ Korea}
\email{dysuh@kaist.ac.kr}

\thanks{Lee and Suh were partially supported by Basic Science 
	Research Program through the National Research Foundation of Korea (NRF) 
	funded by the Ministry of  Science, ICT \& Future Planning 
	(No. 2016R1A2B4010823). Lee was partially supported by IBS-R003-D1.}
\subjclass[2010]{Primary: 55R10, 14M25; Secondary: 57S25, 14M15}
\keywords{flag Bott tower, flag Bott manifold, GKM theory, toric manifold}
\date{\today}

\setcounter{tocdepth}{2} 

%%%%%%%%%%%%%%%%%%%%%%%%%%%%%%%%%%%%%%%%%%%%%%%%%%%%%%%%%%%%%%%%%%%%%%%%
\begin{abstract}
In this article the generic torus orbit closure in a flag Bott manifold is shown to be a non-singular toric variety, and its fan structure is
explicitly calculated.
\end{abstract}
%%%%%%%%%%%%%%%%%%%%%%%%%%%%%%%%%%%%%%%%%%%%%%%%%%%%%%%%%%%%%%%%%%%%%%%%

\maketitle

%%%%%%%%%%%%%%%%%%%%%%%%%%%%%%%%%%%%%%%%%%%%%%%%%%%%%%%%%%%%%%%%%%%%%%%%
\section{Introduction}
%%%%%%%%%%%%%%%%%%%%%%%%%%%%%%%%%%%%%%%%%%%%%%%%%%%%%%%%%%%%%%%%%%%%%%%%
On the full flag manifold $\flag(\mathbb C^{n+1})$ there is an effective action of complex torus~$(\mathbb C^\ast)^n$.
The generic torus orbit closure, which is the closure of a generic $(\mathbb{C}^{\ast})^n$-orbit, in the full flag manifold is well-known to be a non-singular toric  variety, called the permutohedral variety. The fan  of the permutohedral variety consists of Weyl chambers of a Lie group of $A_n$-type as its maximal cones.
Note that the closure of an arbitrary $(\Cstar)^n$-orbit is  known to be normal hence a toric variety, see~\cite[Proposition 4.8]{CaKu20}, but non-singularity of it is not determined in general.

Generic torus orbit closures in a generalized flag manifold $G/P$ are studied in~\cite{FlHa91} and~\cite{Dabr96}, and
arbitrary orbit closures in Grassmannian manifolds are studied in~\cite{GeGoMaSe87}, \cite{GeSe87}, \cite{BuTe18}. Furthermore generic torus orbit closures in  Schubert varieties are studied in \cite{LeeMasuda18}.

In \cite{KLSS18}, the notion of flag Bott manifold is introduced as a generalization of both full flag manifolds and Bott manifolds.
In fact, a flag Bott manifold $F_m$ is the total space of an $m$-sequence of iterated fiber bundles whose fibers are full flag manifolds $\flag(\mathbb C^{n_j +1})$ for $j=1,\ldots, m$, and there is an effective action of  complex torus ${\mathbf H}$ of rank $n=n_1+\cdots+n_m$. 
Therefore, it would be interesting to know when a torus orbit closure of $F_m$ is a non-singular toric variety and to determine its fan structure.

Certain flag Bott manifolds are constructed from  generalized Bott manifolds $B_m$, and such manifolds are called the associated flag Bott manifolds to $B_m$. It is shown in \cite{KLSS18} that the generic torus orbit closure in the associated flag Bott manifold to $B_m$ is
a non-singular toric variety, and such toric variety can be obtained from $B_m$ through a sequence of blow-ups. 

In this article, we consider the generic torus orbit closure $X$ of an arbitrary flag Bott manifold $F_m$.
The toric variety $X$ is the total space of an $m$-sequence of iterated fiber bundles with permutohedral varieties as its fibers.
We calculate the fan of $X$ in Theorem~\ref{main_thm}. As a consequence we can see that $X$ is a non-singular toric variety,
see Proposition~\ref{prop_X_is_smooth}.

%%%%%%%%%%%%%%%%%%%%%%%%%%%%%%%%%%%%%%%%%%%%%%%%%%%%%%%%%%%%%%%%%%%%%%%%
\section{Flag Bott Manifolds}
\label{sec_flag_Bott_manifolds}
%%%%%%%%%%%%%%%%%%%%%%%%%%%%%%%%%%%%%%%%%%%%%%%%%%%%%%%%%%%%%%%%%%%%%%%%
In this section we recall flag Bott manifolds from~\cite{KLSS18} and consider their orbit space construction.
Let $M$ be a complex manifold and $E$ an $n$-dimensional holomorphic vector bundle over $M$. 
Recall from \cite[p. 282]{BoTu82} that 
the \defi{associated flag bundle} 
$\flag(E) \to M$ is obtained from $E$ by replacing each fiber
$E_p$ by the full flag manifold $\flag(E_p)$.

\begin{definition}[{\cite[Definition 2.1]{KLSS18}}]\label{def_fBT}
	A \defi{flag Bott tower} $F_{\bullet} = \{ F_j \mid 0 \leq j \leq m\}$ of
	height $m$ (or an \defi{$m$-stage flag Bott tower}) is a sequence,
	\[
	\begin{tikzcd}
	F_m \arrow[r, "p_m"] & F_{m-1} \arrow[r, "p_{m-1}"] & \cdots \arrow[r, "p_2"]
	& F_1 \arrow[r, "p_1"] & F_0 = \{\text{a point}\},
	\end{tikzcd}
	\]
	of manifolds $F_j = \flag\left(\bigoplus_{k=1}^{n_j+1} \xii{j}{k} \right)$
	where $\xii{j}{k}$ is a holomorphic 
	line bundle over $F_{j-1}$ for each $1 \leq k \leq n_j+1$
	and $1\leq j \leq m$.
	We call $F_j$ the \defi{$j$-stage flag Bott manifold} of the
	flag Bott tower. 
\end{definition}

The full flag manifold $\flag(\C^{n+1})=:\flag(n+1)$ is a flag Bott manifold, and the product of flag manifolds $\flag(n_1+1) \times \cdots \times \flag(n_m+1)$ is a flag Bott manifold. Also an $m$-stage Bott manifold, which is a smooth projective toric variety, is an $m$-stage flag Bott manifold, see~\cite{GrKa94}.

We call two flag Bott towers $F_{\bullet}$ and $F_{\bullet}'$ are \defi{isomorphic} if there is a collection of holomorphic diffeomorphisms $\varphi_j \colon F_j \to F_j'$ which commute with the projections $p_j \colon F_j \to F_{j-1}$ and $p_j' \colon F_j' \to F_{j-1}'$ for all $1\leq j\leq m$.

Suppose that $F_{\bullet}$ is an $m$-stage flag Bott tower. 
To describe the orbit space construction of a flag Bott manifold, we first define the right action $\Phi_j$. Suppose that $B_{GL(n)}$ is the set of upper triangular matrices in $GL(n):=GL(n,\mathbb{C})$ for $n \in \Z_{>0}$. Also let $H_{GL(n)}$ be the set of diagonal matrices in $GL(n)$. 
For positive integers $n$ and $n'$, let $A$ be an integer matrix of size $(n+1) \times (n'+1)$ whose row vectors are $\mathbf{a}_1,\dots,\mathbf{a}_{n+1} \in \Z^{n'+1}$, i.e.,
\[
A = \begin{bmatrix}
\mathbf{a}_1 \\ \mathbf{a}_2 \\ \vdots \\ \mathbf{a}_{n+1}
\end{bmatrix}.
\]
Since the character group $\chi(H_{GL(n'+1)})$ is isomorphic to $\Z^{n'+1}$, the matrix $A$ defines a homomorphism $H_{GL(n'+1)} \to H_{GL(n+1)}$ given by
\[
h \mapsto \text{diag}(h^{\mathbf{a}_1},\dots,h^{\mathbf{a}_{n+1}}).
\]
Here $h=\text{diag}(h_1,\dots,h_{n'+1})$ is an element of $H_{GL(n'+1)}$ and
$h^{\bf a} := h_1^{\mathbf{a}(1)} \cdots h_{n'+1}^{\mathbf{a}(n'+1)}$ for
$\mathbf{a} = (\mathbf{a}(1),\dots, \mathbf{a}(n'+1)) \in \Z^{n'+1}$.
By composing the canonical projection $\Upsilon \colon B_{GL(n'+1)} \to H_{GL(n'+1)}$ which ignores entries not on the diagonals, the matrix~$A$ induces a homomorphism 
\[
\Lambda(A) \colon B_{GL(n'+1)} \to H_{GL(n+1)}. 
\]

Suppose that $(\A{j}{\ell})_{1 \leq \ell < j \leq m} \in \prod_{1 \leq \ell < j \leq m} M_{(n_j+1) \times (n_{\ell}+1)}(\mathbb{Z})$ is 
a sequence of integer matrices. 
We define a right action $\Phi_j$ of $B_{GL(n_1+1)} \times \cdots \times B_{GL(n_j+1)}$ on the product $GL(n_1+1) \times \cdots \times GL(n_j+1)$ by
\begin{equation}\label{eq_def_of_Phi_j}
\begin{split}
\Phi_j&((g_1,g_2,\dots,g_j), (b_1,b_2,\dots,b_j)) \\
& := 	(g_1b_1,
\Lam{2}{1}(b_1)^{-1}g_2b_2,
\Lam{3}{1}(b_1)^{-1}\Lam{3}{2}(b_2)^{-1}g_3b_3,\ldots,\\
& \quad \quad \quad
\Lam{j}{1}(b_1)^{-1}\Lam{j}{2}(b_2)^{-1} \cdots \Lam{j}{j-1}(b_{j-1})^{-1}
g_jb_j)
\end{split}
\end{equation}
where $\Lam{j}{\ell} := \Lambda(\A{j}{\ell})$ for $1 \leq \ell < j \leq m$. 
Then the action $\Phi_j$ is free and proper, see~\cite[Lemma 2.7]{KLSS18}. Hence
we have complex manifolds 
\[
(GL(n_1+1) \times \cdots \times GL(n_j+1))/\Phi_j \quad \text{ for } 1\leq j \leq m. 
\]
These manifolds are actually flag Bott manifolds (see~\cite[Proposition 2.8]{KLSS18}), and every flag Bott manifold can be obtained by this construction.
\begin{proposition}[{\cite[Proposition 2.11]{KLSS18}}]
	Let $F_{\bullet}$ be a flag Bott tower of height $m$. Then there is a sequence of integer matrices 
	\[
	(\A{j}{\ell})_{1 \leq \ell < j \leq m} \in \prod_{1 \leq \ell < j \leq m} M_{(n_j+1) \times (n_{\ell}+1)}(\mathbb{Z})
	\] 
	such that
	$F_{\bullet}$ is isomorphic to 
	\[
	\{(GL(n_1+1) \times \cdots \times GL(n_j+1))/\Phi_j \mid 0\leq j \leq m\}
	\]
	as flag Bott towers.
\end{proposition}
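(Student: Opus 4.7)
The plan is to induct on the height $m$. For the base case $m=1$, each line bundle $\xii{1}{k}$ is over the point $F_0$, hence canonically trivial, so $F_1 = \flag(\C^{n_1+1}) \cong GL(n_1+1)/B_{GL(n_1+1)}$ directly realizes the quotient construction with no matrices needed.

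For the inductive step, assume $F_{m-1}$ is isomorphic as a flag Bott tower to $Q_{m-1} := (GL(n_1+1) \times \cdots \times GL(n_{m-1}+1))/\Phi_{m-1}$ via some sequence of matrices $(\A{j}{\ell})_{1 \leq \ell < j \leq m-1}$. Fix such an isomorphism and transport the line bundles $\xii{m}{k}$ over $F_{m-1}$ to line bundles over $Q_{m-1}$. I would then classify the holomorphic line bundles on $Q_{m-1}$: writing $P_{m-1} := \prod_{\ell=1}^{m-1} GL(n_\ell+1)$ and $B_{m-1} := \prod_{\ell=1}^{m-1} B_{GL(n_\ell+1)}$, since $\Phi_{m-1}$ is free and proper, holomorphic line bundles on $Q_{m-1}$ correspond to $B_{m-1}$-equivariant line bundles on $P_{m-1}$. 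Because $GL(n+1)$ is Zariski open in affine space, $\text{Pic}(P_{m-1}) = 0$, so equivariant bundles correspond exactly to characters of $B_{m-1}$; since the unipotent radical of each Borel admits no non-trivial characters, these factor through the maximal torus, yielding an identification $\text{Pic}(Q_{m-1}) \cong \bigoplus_{\ell=1}^{m-1} \Z^{n_\ell+1}$.

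Under this identification, the line bundle $\xii{m}{k}$ corresponds to row vectors $\bolda{m}{k}{\ell} \in \Z^{n_\ell+1}$ for $\ell = 1, \dots, m-1$; stacking them for $k = 1, \dots, n_m+1$ defines the matrices $\A{m}{\ell} \in M_{(n_m+1) \times (n_\ell+1)}(\Z)$. It then remains to verify that the flag bundle $\flag\bigl(\bigoplus_{k=1}^{n_m+1} \xii{m}{k}\bigr)$ over $F_{m-1}$ is isomorphic to $Q_m$ with this extended data. This amounts to a diagram chase with~\eqref{eq_def_of_Phi_j}: quotienting $\prod_{\ell=1}^{m} GL(n_\ell+1)$ first by the free right action of $B_{GL(n_m+1)}$ on the last factor produces a $\flag(n_m+1)$-bundle over $P_{m-1}$, and the remaining twisting by $\Lam{m}{\ell}(b_\ell)^{-1}$ realizes precisely the associated flag bundle of $\bigoplus_k \xii{m}{k}$ over $Q_{m-1}$; by construction the resulting projection onto $Q_{m-1}$ agrees with $p_m$, so the isomorphism assembles into an isomorphism of flag Bott towers.

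The main obstacle I anticipate is the Picard-group step: one must argue that \emph{every} holomorphic line bundle on $Q_{m-1}$ arises from a character of $B_{m-1}$, and moreover that the correspondence is compatible with the inductive isomorphism of towers. Once this is secured (via triviality of $\text{Pic}(GL(n+1))$ and the vanishing of characters on unipotent radicals), the remainder is essentially bookkeeping against the formula for $\Phi_m$.
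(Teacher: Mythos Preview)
The paper does not prove this proposition; it is quoted from \cite[Proposition~2.11]{KLSS18} and no argument is given here, so there is nothing in the present paper to compare your proposal against.

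Your inductive strategy is the natural one and is essentially what one would expect the cited proof to do. Two small points are worth tightening. First, the map from characters of $B_{m-1}$ to $\text{Pic}(Q_{m-1})$ is surjective but not an isomorphism: already for $\flag(n+1)=GL(n+1)/B$ the character $\det$ extends to all of $GL(n+1)$ and hence yields the trivial bundle, so $\text{Pic}\cong\Z^{n}$ rather than $\Z^{n+1}$. This does not harm your argument, since the proposition only asks for \emph{some} integer matrices, and surjectivity is all you use. Second, your justification of surjectivity via ``$\text{Pic}(P_{m-1})=0$ so equivariant bundles correspond to characters'' skips over the fact that $\mathcal{O}^*(GL(n+1))$ contains non-constant units such as powers of $\det$, so the cocycle computation is not literally $\text{Hom}(B_{m-1},\C^*)$. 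A cleaner route is to compute $\text{Pic}(Q_{m-1})$ inductively from the flag-bundle structure (each step adds a $\Z^{n_\ell}$ generated by the tautological quotient line bundles), or simply to invoke GAGA on the projective variety $Q_{m-1}$ and work algebraically. Once surjectivity is in hand, the rest of your outline --- reading off the rows $\bolda{m}{k}{\ell}$ and matching the associated flag bundle with the quotient by $\Phi_m$ --- is straightforward bookkeeping, exactly as you say.
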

\begin{remark}
We notice that the sequence of integer matrices  
$(\A{j}{\ell})_{1 \leq \ell < j \leq m} \in \prod_{1 \leq \ell < j \leq m} M_{(n_j+1) \times (n_{\ell}+1)}(\mathbb{Z})$ 
are associated to the first Chern classes of line bundles $\xi^{(j)}_k$ in the construction of a flag Bott manifold. 
To be more precisely, suppose that $\bolda{j}{k}{\ell}$ be the $k$th row vector of the matrix $\A{j}{\ell}$. Then the set of vector $\{\bolda{j}{k}{1},\dots,\bolda{j}{k}{j-1}\}$ determines the first Chern class of the line bundle $\xi^{(j)}_k$. For more details, see~\cite[\S 2]{KLSS18}.
\end{remark}
\begin{example}\label{example_fBT_2_stage}
Suppose that $n_1 = 2$ and $n_2 = 1$. Consider 
$\A{2}{1} = \begin{bmatrix}
c_1 & c_2 & 0 \\ 0 & 0 & 0
\end{bmatrix}$.
Then the following Bott tower $\{F_j \mid 0 \leq j \leq 2\}$ is isomorphic to $\{(GL(n_1+1) \times \cdots \times GL(n_j+1))/\Phi_j \mid 0 \leq j \leq 2\}$ as flag Bott towers.
\[
\begin{tikzcd}[row sep = 0.2cm]
\flag(\xi(c_1,c_2,0) \oplus \underline{\C}) \arrow[d, equal] \rar & \flag(3) \arrow[d, equal] \rar & \{\text{a point}\} \arrow[d, equal] \\
F_2 & F_1 & F_0
\end{tikzcd}
\]
The line bundle $\xi(c_1,c_2,0)$ over $F_1$ is $(GL(3) \times \C)/B_{GL(3)}$, where the right action of $B_{GL(3)}$ is defined by
\[
(g,v) \cdot b = (gb, b_{11}^{-c_1} b_{22}^{-c_2} v)
\]
for $g \in GL(3)$ and $b = (b_{ij}) \in B_{GL(3)}$. 
\end{example}

%%%%%%%%%%%%%%%%%%%%%%%%%%%%%%%%%%%%%%%%%%%%%%%%%%%%%%%%%%%%%%%%%%%%%%%%
\section{Generic torus orbit closures in flag Bott manifolds}
%%%%%%%%%%%%%%%%%%%%%%%%%%%%%%%%%%%%%%%%%%%%%%%%%%%%%%%%%%%%%%%%%%%%%%%%
Let $F_m$ be an $m$-stage flag Bott manifold, and let $\mathbb H = H_{GL(n_1+1)} \times \cdots \times H_{GL(n_m+1)}$.
As introduced in \cite[\S 3.1]{KLSS18}, the \defi{canonical action} of $\mathbb H$ on $F_j$ is defined to be
\[
(h_1,\dots,h_m) \cdot [g_1,\dots,g_j] := [h_1g_1,\dots,h_jg_j] \quad
\text{ for } 1\leq j \leq m.
\]
Then $q_j \colon F_j \to F_{j-1}$ is an $\mathbb H$-equivariant fiber bundle. 
Note that this action is not effective. 
If we write $h_j = \text{diag}(h_{j,1},\dots,h_{j,n_j+1}) \in GL(n_j+1)$, then the subtorus
\[
\mathbf H := \{(h_1,\dots,h_m) \in \mathbb H \mid
h_{1,n_1+1} = \cdots = h_{m,n_m+1} = 1\} \cong (\Cstar)^{n}
\]
acts effectively on $F_m$, where
\[
n:= n_1 + \cdots + n_m.
\]

In order to consider the closure of torus orbit with respect to the canonical action, we define a generic element in $F_m$. Let $g = (g_{ij})$ be an element in $GL(n+1)$. For an ordered sequence $1 \leq i_1 < i_2 < \cdots < i_k \leq n+1$,
we define the Pl\"{u}cker coordinate
\[
X_{i_1,\dots,i_k}(g) := \det((g_{i_p,p})_{p=1,\dots,k}).
\]
\begin{definition}[{\cite[Definition 5.4]{KLSS18}}]
We call an element $g \in GL(n+1)$ is \defi{generic} if $X_{i_1,\dots,i_k}(g)$ is nonzero for any $k \in [n+1]$ and an ordered sequence $1 \leq i_1 < \cdots < i_k \leq n+1$. 
A point $[g_1,\dots,g_m] \in F_m$ is \defi{generic} if $g_j \in GL(n_j+1)$ is generic for all $1\leq j \leq m$.
A \defi{generic torus orbit} in $F_m$ is the $\mathbf{H}$-orbit of a generic point. 
\end{definition}
The above definition of generic elements and generic torus orbits can be found in~\cite{FlHa91}, \cite{Klya95}, and \cite{Dabr96}.
The closure $X_n$ of a generic torus orbit in the flag manifold~$\flag(n+1)$ is a smooth projective toric variety called the \defi{permutohedral variety}, see~\cite{Klya85}. 
We recall the fan $\Sigma_{X_n} \subset \mathbb{R}^{n}$ of the permutohedral variety $X_n$ for the later use. The rays in $\Sigma_{X_n}$ are parametrized by the nonempty proper subsets of $[n+1]$:
\[
\Sigma_{X_n}(1) \stackrel{1-1}{\longleftrightarrow} \{ S \mid \emptyset \subsetneq S \subsetneq [n+1] \}
\]
More precisely, for a nonempty proper subset $S$ of $[n+1]$, the corresponding ray $\rho_S$ is generated by
\[
u_S := \begin{cases}
\displaystyle\sum_{s \in S} \varepsilon_s & \text{ if } n+1 \notin S, \\
\displaystyle-\sum_{s \in [n+1] \setminus S} \varepsilon_s & \text{ otherwise},
\end{cases}
\]
where $\{\varepsilon_1,\dots,\varepsilon_n\}$ is the standard basis vector of $\mathbb{R}^n$. Hence there are $2^{n+1}-2$ many rays in $\Sigma_{X_n}$, see Figure~\ref{fig_ray}.
And the maximal cones in $\Sigma_{X_n}$ are  indexed by the set of proper chain of nonempty proper subsets of $[n+1]$. For a given proper chain 
\[
S_{\bullet} : \emptyset \subsetneq S_1 \subsetneq S_2 \subsetneq \cdots \subsetneq S_n \subsetneq [n+1]
\] 
of nonempty proper subsets, we have the corresponding maximal cone
\[
\text{Cone}(u_{S_1},u_{S_2},\dots,u_{S_n}).
\]
See Figure~\ref{fig_m_cones}.
Therefore $|\Sigma_{X_n}(n)| = (n+1)!$. 
\begin{figure}
\begin{subfigure}[b]{0.5\textwidth}
	\centering
\begin{tikzpicture}
	\draw[->] (0,0)--(0,1) node[above] {$u_{\{2\}}$};
	\draw[->] (0,0) -- (1,0) node[right] {$u_{\{1\}}$};
	\draw[->] (0,0) -- (1,1) node[right] {$u_{\{1,2\}}$};
	\draw[->] (0,0) -- (-1,0) node[left] {$u_{\{2,3\}}$};
	\draw[->] (0,0) -- (0,-1) node[below] {$u_{\{1,3\}}$};
	\draw[->] (0,0) -- (-1,-1) node[below] {$u_{\{3\}}$};
\end{tikzpicture}
\caption{Ray generators in $\Sigma_{X_2}$.}
\label{fig_ray}
\end{subfigure}%
\begin{subfigure}[b]{0.5\textwidth}
	\centering
\begin{tikzpicture}
\fill[pattern color = black!10!white, pattern = vertical lines] (0,0)--(2,0)--(2,2)--cycle;
\fill[pattern color = black!10!white, pattern = north east lines] (0,0) -- (-2,0) -- (-2,2)--(0,2) -- cycle;
\fill[pattern color = black!10!white, pattern = horizontal lines] (0,0)--(2,2)--(0,2)--cycle;
\fill[pattern color = black!10!white, pattern = north east lines] (0,0)--(2,0) -- (2,-2) -- (0,-2) -- cycle;
\fill[pattern color = black!10!white, pattern = horizontal lines] (0,0) -- (0,-2) -- (-2,-2) --cycle;
\fill[pattern color = black!10!white, pattern = vertical lines] (0,0) -- (-2,-2)--(-2,0) --cycle;

\draw[gray] (0,0) -- (0,2);
\draw[gray] (0,0) -- (2,0);
\draw[gray] (0,0) -- (2,2);
\draw[gray] (0,0) -- (-2,0);
\draw[gray] (0,0) -- (-2,-2);
\draw[gray] (0,0) -- (0,-2);

\node at (0.75,1.5) {\scriptsize $\{2\} \subsetneq \{1,2\}$};
\node at (1.25,0.3) {\scriptsize $\{1\} \subsetneq \{1,2\}$};
\node at (1.25,-1) {\scriptsize $\{1\} \subsetneq \{1,3\}$};
\node at (-0.75,-1.5) {\scriptsize $\{3\} \subsetneq \{1,3\}$};
\node at (-1.25,-0.3) {\scriptsize $\{3\} \subsetneq \{2,3\}$};
\node at (-1.25,1) {\scriptsize $\{2\} \subsetneq \{2,3\}$};

\draw[thick,->] (0,0)--(0,1) ;
\draw[thick,->] (0,0) -- (1,0) ;
\draw[thick,->] (0,0) -- (1,1) ;
\draw[thick,->] (0,0) -- (-1,0) ;
\draw[thick,->] (0,0) -- (0,-1) ;
\draw[thick,->] (0,0) -- (-1,-1) ;

\end{tikzpicture}
\caption{Maximal cones in $\Sigma_{X_2}$.}
\label{fig_m_cones}
\end{subfigure}
\caption{Fan $\Sigma_{X_2}$.}
\end{figure}
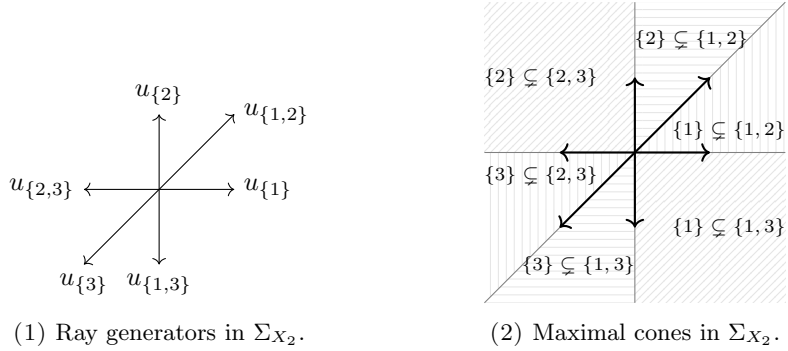

Suppose that $F_m$ is an $m$-stage flag Bott manifold. Considering the effective canonical $\mathbf H$-action, each fiber of a bundle $F_j \to F_{j-1}$ has the restricted $(\Cstar)^{n_j}$-action, and its orbit closure of a generic point is the permutohedral variety $X_{n_j}$. Hence the closure of a generic torus orbit of the torus $\mathbf H$ in $F_m$ is an iterated permutohedral varieties bundles. 
Now we describe the fan of a generic torus orbit closure in $F_m$ in the following theorem
whose proof will be given in Section~\ref{sec_proof_of_main_thm}.
\begin{theorem}\label{main_thm}
Let $F_m$ be a flag Bott manifold determined by the sequence of integer matrices $(\A{j}{\ell})_{1 \leq \ell < j \leq m} \in \prod_{1 \leq \ell < j \leq m} M_{(n_j+1) \times (n_{\ell}+1)}(\mathbb{Z})$.
Then the fan $\Sigma \subset \mathbb{R}^{n}$ of a generic torus orbit closure $X$ in $F_m$ is described as follows:
\begin{enumerate}
	\item the rays in $\Sigma$ are parametrized by 
	\[
	\{(\ell,S) \mid \emptyset \subsetneq S \subsetneq [n_{\ell}+1], 1 \leq \ell \leq m\}.
	\] 
	For $(\ell,S)$, the corresponding ray is generated by
	\[
	u^{\ell}_S := \begin{cases}\displaystyle
	\sum_{s \in S} \varepsilon_{\ell,s}  
	- \sum_{p = \ell+1}^m \sum_{k=1}^{n_{p}+1}
	((\A{p}{\ell})_{k,d+1}+\cdots+(\A{p}{\ell})_{k,n_{\ell}+1})\varepsilon_{p,k}
	& \text{ if } n_{\ell}+1 \notin S, \\
	\displaystyle -\sum_{s \in [n_{\ell}+1] \setminus S} \varepsilon_{\ell,s}
	 + \sum_{p=\ell+1}^m \sum_{k=1}^{n_p+1}((\A{p}{\ell})_{k,1} + \cdots+ (\A{p}{\ell})_{k,d})\varepsilon_{p,k} & \text{ otherwise},
	\end{cases}
	\]
	where $d = |[n_{\ell}+1]\setminus S|$ 
	and $\{\varepsilon_{\ell,k}\}_{1\leq k \leq n_{\ell},1 \leq \ell \leq m}$ is the standard basis vector in $\mathbb{R}^{n} \cong \Lie(\mathbf T)$. Here we set $\ep_{1,n_1+1} = \cdots = \ep_{m,n_m+1} = 0$.
	\item The maximal cones in $\Sigma$ are indexed by the sequences of proper chains of subsets
	\[\{(S^1_{\bullet},\dots,S^m_{\bullet}) \mid S^{\ell}_{\bullet} \colon \emptyset \subsetneq S_1^{\ell} \subsetneq S_2^{\ell} \subsetneq  \cdots \subsetneq S_{n_{\ell}}^{\ell} \subsetneq [n_{\ell}+1],1 \leq \ell \leq m\}. 
	\]
	For $(S^1_{\bullet},\dots,S^m_{\bullet})$, the corresponding maximal cone is defined to be
	\[
	\textup{Cone}\left(\bigcup_{\ell=1}^m\{u^{\ell}_{S_1^{\ell}},\dots,u^{\ell}_{S_{n_{\ell}}^{\ell}}\}\right).
	\]	
\end{enumerate}
\end{theorem}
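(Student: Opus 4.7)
My plan is to proceed by induction on the height $m$ of the flag Bott tower. The base case $m=1$ is the classical description of the permutohedral variety $X_{n_1}$ recalled just before the theorem: the correction sums $\sum_{p=2}^m(\cdots)$ are empty, so the formula for $u^1_S$ specializes to the standard ray generator $u_S$, and the claim about maximal cones reduces to the known description of $\Sigma_{X_{n_1}}$.

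For the inductive step I would first exploit the $\mathbf{H}$-equivariant fibration $q_m \colon F_m \to F_{m-1}$. Since the generic orbit closure $X' \subset F_{m-1}$ is toric of dimension $n - n_m$ by induction, and since the fiber of $q_m$ over a generic point is $\flag(n_m+1)$ on which the residual $(\Cstar)^{n_m}$-action has permutohedral orbit closure $X_{n_m}$, the restriction $q_m \colon X \to X'$ is a locally trivial $X_{n_m}$-bundle. In particular $X$ is smooth and toric of dimension $n$, which also yields Proposition~\ref{prop_X_is_smooth}.

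I would then compute the fan using the standard recipe for a toric fiber bundle. In the splitting $\Lie(\mathbf H) = N' \oplus \mathbb{R}^{n_m}$ corresponding to stages $\ell \leq m-1$ and $\ell = m$, the rays $u^m_S$ lift as the usual permutohedral rays purely in the second summand, while each ray $u^\ell_S$ of $X'$ for $\ell < m$ lifts to the total space with a correction term in $\mathbb{Z}^{n_m}$ that records the character by which $B_{GL(n_m+1)}$ acts on the line bundle fiber over the corresponding toric divisor. By the orbit-space description~\eqref{eq_def_of_Phi_j} this character is read off from $\Lam{m}{\ell}(b_\ell)$, whose restriction to the effective subtorus $\mathbf H$ contributes exactly the $\sum_{k=1}^{n_m+1}(\cdots)\varepsilon_{m,k}$ summand in the stated formula for $u^\ell_S$. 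The two-case split corresponds to the two natural choices of representative in the local chart of $X_{n_\ell}$ depending on whether $n_\ell+1 \in S$ or not; these produce the partial row sums over columns $\{d+1,\dots,n_\ell+1\}$ and $\{1,\dots,d\}$ with opposite signs. To verify the maximal cone description, I would check that the chain chart of $X$ over $(S^1_\bullet,\dots,S^m_\bullet)$ splits as the product of the chain chart of $X'$ over $(S^1_\bullet,\dots,S^{m-1}_\bullet)$ with the chain chart of the fiber $X_{n_m}$ over $S^m_\bullet$, yielding an affine toric variety of dimension $n$ whose ray generators are precisely the listed $u^\ell_{S^\ell_j}$.

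The main obstacle is showing that the formula for $u^\ell_S$ involves \emph{all} matrices $\A{p}{\ell}$ with $p > \ell$ and not merely $p = \ell+1$. By induction, the ray $u^\ell_S$ inside $X'$ already carries corrections from stages $\ell+1,\dots,m-1$; one must show that these assemble additively with the new correction coming from stage $m$. The key point is that the transition data $\Lam{p}{\ell}(b_\ell)$ depends only on the variable $b_\ell$ and not on the intermediate $b_{\ell+1},\dots,b_{p-1}$, so after projection to the diagonal torus the contributions from different stages decouple. Combined with the remark following Proposition~2.11 identifying the rows of $\A{p}{\ell}$ with the first Chern classes of $\xii{p}{k}$, a careful bookkeeping in the orbit-space model of how local trivializations over each chain chart lift unambiguously through each successive layer $F_\ell \to F_{\ell-1}$ shows that the corrections from each stage simply add, producing the stated formula.
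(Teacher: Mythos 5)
Your inductive approach via the toric fiber bundle $q_m\colon X \to X'$ is a genuinely different organization from the paper's argument. The paper proves part~(2) by citing the join decomposition $\Sigma = \widetilde{\Sigma}_{n_1}\bigcdot\cdots\bigcdot\Sigma_{n_m}$ from~\cite[Lemma~5.11]{KLSS18}, and proves part~(1) not by analyzing the liftings at all but by verifying the Buchstaber--Panov pairing condition $\langle w^j_i, u^\ell_S\rangle = \delta_{j\ell}\delta_{id}$ directly, using the explicit isotropy weight formulas from~\cite[Proposition~3.5]{KLSS18} at the fixed point indexed by $v=(e,\dots,e,v_{\ell,S},e,\dots,e)$. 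The decisive step there is the algebraic identity~\eqref{eq_Xjell_and_A}, $\X{j}{\ell}B_\ell^{-1} = \sum_{p=\ell+1}^{j-1}\X{j}{p}\A{p}{\ell} + \A{j}{\ell}$, which is what makes the products of $A$-matrices appearing in the tangent weights collapse to the single matrices $\A{p}{\ell}$ in the pairing.

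You have correctly identified the crux (``one must show that these assemble additively''), but the justification offered --- that $\Lam{p}{\ell}(b_\ell)$ depends only on $b_\ell$ --- is not by itself sufficient. The danger is cascading: as the ray direction $u^\ell_S$ is degenerated, the compensating Borel element $b_\ell(t)$ twists every stage $p>\ell$ through $\Lam{p}{\ell}(b_\ell(t))^{-1}$, and one must \emph{prove} that the induced compensating elements $b_q(t)$ for $\ell<q<m$ remain constant (so no extra $\Lam{m}{q}(b_q(t))$ contribution appears). This is exactly the content of the paper's identity~\eqref{eq_Xjell_and_A} together with the cancellations in Case~3 of the proof, which show how the chain sums defining $\X{j}{\ell}$ reorganize against the stage-$\ell$ term $\sum_{s\in S}(\X{j}{\ell})_{i,s}$. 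Your plan would need an argument playing the same role --- for instance, a direct verification that $\lambda_p(t)\Lam{p}{\ell}(b_\ell(t))^{-1}$ is bounded and non-degenerate for each $\ell<p<m$, using the induction hypothesis --- before the ``corrections simply add'' conclusion can be drawn. As written, this is asserted rather than established, and the phrase ``careful bookkeeping'' is carrying the weight of what in the paper is an explicit matrix computation.
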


We notice that if $F_m$ is determined by the sequence of integer matrices such that each $\A{j}{\ell}$ has nonzero value only on the first column, then $F_m$ is an \defi{associated flag Bott manifold} to a generalized Bott manifold (see, for more details, \cite[\S 4]{KLSS18}). The fan of generic torus orbit closure in an associated flag Bott manifold has been computed in~\cite[Theorem 5.5]{KLSS18}, and Theorem~\ref{main_thm} extends this result for considering any flag Bott manifold.

\begin{example}
Suppose that $F_2$ is a $2$-stage flag Bott manifold given in Example~\ref{example_fBT_2_stage}. Then the fan $\Sigma \subset \mathbb{R}^{3}$ of the generic torus orbit closure in $F_2$ has eight rays with the ray vectors:
\begin{align*}
u^1_{\{1\}} &= (1,0,0), & u^1_{\{2\}}&=(0,1,0), & 
u^1_{\{3\}}&=(-1,-1,c_1+c_2), \\
u^1_{\{1,2\}} &= (1,1,-c_2), & 
u^1_{\{2,3\}} &= (-1,0,c_1), & u^1_{\{1,3\}} &= (0,-1,c_1), \\
u^2_{\{1\}} &= (0,0,1), & u^2_{\{2\}} &= (0,0,-1). &  & \\
\end{align*}
Moreover the fan $\Sigma$ has twelve maximal cones:
\begin{align*}
&\text{Cone}(u^1_{\{1\}}, u^1_{\{1,2\}}, u^2_{\{1\}}), &
&\text{Cone}(u^1_{\{1\}}, u^1_{\{1,3\}}, u^2_{\{1\}}), &
&\text{Cone}(u^1_{\{2\}}, u^1_{\{1,2\}}, u^2_{\{1\}}), \\
&\text{Cone}(u^1_{\{2\}}, u^1_{\{2,3\}}, u^2_{\{1\}}), &
&\text{Cone}(u^1_{\{3\}}, u^1_{\{1,3\}}, u^2_{\{1\}}), &
&\text{Cone}(u^1_{\{3\}}, u^1_{\{2,3\}}, u^2_{\{1\}}), \\
&\text{Cone}(u^1_{\{1\}}, u^1_{\{1,2\}}, u^2_{\{2\}}), &
&\text{Cone}(u^1_{\{1\}}, u^1_{\{1,3\}}, u^2_{\{2\}}), &
&\text{Cone}(u^1_{\{2\}}, u^1_{\{1,2\}}, u^2_{\{2\}}), \\
&\text{Cone}(u^1_{\{2\}}, u^1_{\{2,3\}}, u^2_{\{2\}}), &
&\text{Cone}(u^1_{\{3\}}, u^1_{\{1,3\}}, u^2_{\{2\}}), &
&\text{Cone}(u^1_{\{3\}}, u^1_{\{2,3\}}, u^2_{\{2\}}).
\end{align*}
\end{example}

\begin{example}\label{example_F3_ray_vectors}
Suppose that $F_3$ is a $3$-stage flag Bott manifold determined by 
\[
\A{2}{1} = \begin{bmatrix}
x_{11} & x_{12} & 0 \\ 
x_{21} & x_{22} & 0 \\
0 & 0 & 0 
\end{bmatrix},\quad
\A{3}{1} = \begin{bmatrix}
y_1 & y_2 & 0 \\ 0 & 0 & 0
\end{bmatrix}, \quad
\A{3}{2} = \begin{bmatrix}
z_1 & z_2 & 0  \\ 0 & 0 & 0
\end{bmatrix}.
\]
Then the fan $\Sigma \subset \mathbb{R}^5$ of the generic torus orbit closure in $F_3$ has fourteen rays which are generated by the following vectors:
\begin{align*}
u^1_{\{1\}} &= (1,0,0,0,0), & u^1_{\{2\}} &= (0,1,0,0,0), \\ 
u^1_{\{3\}} &= (-1,-1, x_{11}+x_{12},x_{21}+x_{22}, y_1+y_2), & u^1_{\{1,2\}} &= (1,1,-x_{12},-x_{22},-y_2), \\ 
u^1_{\{2,3\}} &= (-1,0,x_{11},x_{21},y_1), & u^1_{\{1,3\}} &= (0,-1,x_{11},x_{21},y_1),\\
u^2_{\{1\}} &= (0,0,1,0,0), & u^2_{\{2\}} &= (0,0,0,1,0), \\
u^2_{\{3\}} &= (0,0,-1,-1,z_1+z_2), & u^2_{\{1,2\}} &= (0,0,1,1,-z_2), \\
u^2_{\{2,3\}} &= (0,0,-1,0,z_1), & u^2_{\{1,3\}} &= (0,0,0,-1,z_1), \\
u^3_{\{1\}} &= (0,0,0,0,1), & u^3_{\{2\}} &= (0,0,0,0,-1).
\end{align*}
\end{example}

\begin{proposition}\label{prop_X_is_smooth}
All generic torus orbit closures in a flag Bott manifold $F_m$ are isomorphic non-singular toric varieties.
\end{proposition}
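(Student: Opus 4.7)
The plan is to deduce both statements directly from Theorem~\ref{main_thm}.

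For the isomorphism assertion, the fan $\Sigma$ produced in Theorem~\ref{main_thm} depends only on the defining matrices $(\A{j}{\ell})$ of $F_m$ and not on the particular generic point whose $\mathbf H$-orbit closure one takes. Hence any two generic torus orbit closures are toric varieties realized by the same fan, and are therefore isomorphic as toric varieties.

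For non-singularity it suffices, by the standard toric dictionary, to check that every maximal cone of $\Sigma$ is smooth, meaning that its $n$ primitive ray generators form a $\mathbb{Z}$-basis of the cocharacter lattice $\mathbb{Z}^n \subset \mathbb{R}^n$. Fix a maximal cone indexed by a tuple of chains $(S^1_\bullet,\dots,S^m_\bullet)$ and list its ray generators in increasing order of the level $\ell$, i.e.\ as
\[
u^1_{S_1^1},\dots,u^1_{S_{n_1}^1},\ u^2_{S_1^2},\dots,u^2_{S_{n_2}^2},\ \dots,\ u^m_{S_1^m},\dots,u^m_{S_{n_m}^m}.
\]
The key structural observation, read off from the formulas in Theorem~\ref{main_thm}, is that each $u^\ell_S$ has vanishing components in every coordinate $\varepsilon_{p,\cdot}$ with $p<\ell$, since in both branches of the defining formula the sum over $p$ runs from $\ell+1$ to $m$. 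Ordering the columns by increasing $p$ as well, the resulting $n\times n$ matrix of ray generators is therefore block upper triangular with respect to the level filtration.

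The $\ell$-th diagonal block is the $n_\ell \times n_\ell$ submatrix of $\varepsilon_{\ell,\cdot}$-components of the level-$\ell$ rays $u^\ell_{S_1^\ell},\dots,u^\ell_{S_{n_\ell}^\ell}$; comparing with the description of the permutohedral fan $\Sigma_{X_{n_\ell}}$ recalled above, this block is exactly the matrix of ray generators of the maximal cone of $\Sigma_{X_{n_\ell}}$ associated to the chain $S^\ell_\bullet$. Smoothness of the permutohedral variety $X_{n_\ell}$ gives that each diagonal block lies in $GL_{n_\ell}(\mathbb{Z})$, and block-triangularity then puts the full matrix in $GL_n(\mathbb{Z})$. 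Hence every maximal cone of $\Sigma$ is smooth, so $X$ is a non-singular toric variety. The only content beyond Theorem~\ref{main_thm} is recognizing the block upper triangular structure produced by the level filtration and matching the diagonal blocks with known permutohedral data; I do not anticipate any substantive obstacle once this identification is in hand.
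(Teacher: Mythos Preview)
Your proof is correct and follows essentially the same approach as the paper: deduce both the isomorphism and the smoothness directly from Theorem~\ref{main_thm}, using the block-triangular shape of the matrix of ray generators and the smoothness of the permutohedral diagonal blocks (the paper's Lemma~\ref{lemma_permutohedral_smooth}). The only cosmetic difference is that the paper places the ray generators as columns and obtains a block \emph{lower} triangular matrix, whereas you implicitly place them as rows and get block \emph{upper} triangular; this is just a transpose and does not affect the argument.
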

To give a proof of Proposition~\ref{prop_X_is_smooth}, we use the following lemma:
\begin{lemma}[{\cite[Corollary of Theorem 3.3]{Dabr96}}]\label{lemma_permutohedral_smooth}
	The permutohedral variety $X_n$ is smooth, i.e., 
	for a proper chain $S_{\bullet} \colon \emptyset \subsetneq S_1 \subsetneq S_2 \subsetneq \cdots \subsetneq S_n \subsetneq [n+1]$ 
	of nonempty proper subsets of $[n+1]$, the determinant of the matrix
	\begin{equation}\label{eq_matrix_permutohedron}
	[ u_{S_1} \ u_{S_2} \ \cdots \ u_{S_n}]
	\end{equation}
	is $\pm 1$. 
\end{lemma}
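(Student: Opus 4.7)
The plan is to exploit the telescoping structure of the chain $S_1 \subsetneq S_2 \subsetneq \cdots \subsetneq S_n$ via column operations on the matrix in \eqref{eq_matrix_permutohedron}, reducing it to a signed permutation matrix, whose determinant is automatically $\pm 1$.

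First I would parametrize the chain by the element added at each step: write $\{a_i\} = S_i \setminus S_{i-1}$ with the convention $S_0 = \emptyset$, so that $a_1, a_2, \ldots, a_n$ are $n$ distinct elements of $[n+1]$ missing exactly one element, say $m \in [n+1]$, and $S_i = \{a_1, \ldots, a_i\}$. Let $k$ be the index with $a_k = n+1$, with the convention $k = \infty$ (equivalently $m = n+1$) when $n+1$ never appears in the chain.

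The main computational step is to determine the column differences $v_i := u_{S_i} - u_{S_{i-1}}$ for $i \geq 2$ by a direct case analysis on the two-branch definition of $u_S$. When $i < k$ both $u_{S_{i-1}}$ and $u_{S_i}$ use the first branch and one gets $v_i = \varepsilon_{a_i}$; when $i > k$ both use the second branch, and since $[n+1]\setminus S_i$ differs from $[n+1]\setminus S_{i-1}$ by removal of $a_i$, one again gets $v_i = \varepsilon_{a_i}$; when $i = k$ the branch switches, and using the identity $[n+1]\setminus S_k = [n]\setminus S_{k-1}$ one computes $v_k = -\sum_{s=1}^n \varepsilon_s$. For the leading column, $u_{S_1}$ equals $\varepsilon_{a_1}$ if $k \geq 2$ and equals $-\sum_{s=1}^n \varepsilon_s$ if $k = 1$.

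After applying the determinant-preserving column operations $C_i \mapsto C_i - C_{i-1}$ in decreasing order $i = n, n-1, \ldots, 2$, the matrix becomes $[u_{S_1} \ v_2 \ \cdots \ v_n]$. If $k = \infty$, every column is a distinct standard basis vector $\varepsilon_{a_j}$, yielding a permutation matrix directly. Otherwise exactly one column equals $-\sum_{s=1}^n \varepsilon_s$ and the remaining $n-1$ columns are distinct vectors $\varepsilon_{a_j}$ with $\{a_j : j \neq k\} = [n]\setminus\{m\}$; adding all of these $n-1$ columns to the special one converts it into $-\varepsilon_m$, and the resulting matrix is a signed permutation matrix, hence has determinant $\pm 1$. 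The only real care needed is in the branch-switch step $i = k$, where signs must be tracked carefully; the rest is a routine telescoping.
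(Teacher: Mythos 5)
Your proof is correct, and I checked the details: the case analysis for $v_i = u_{S_i} - u_{S_{i-1}}$ is right in all three branches (in particular, in the branch-switch step the identity $[n+1]\setminus S_k = [n]\setminus S_{k-1}$ together with $S_{k-1}\subseteq [n]$ does give $v_k = -\sum_{s=1}^{n}\varepsilon_s$, and when $k=1$ the leading column itself plays this role), the telescoping column operations preserve the determinant, and in the finite-$k$ case the remaining columns $\{\varepsilon_{a_j}: j\neq k\}$ do exhaust $[n]\setminus\{m\}$ with $m\in[n]$, so the final clean-up producing $-\varepsilon_m$ yields a signed permutation matrix. Note, however, that the paper does not prove this lemma at all: it simply cites it as a corollary of Theorem~3.3 of Dabrowski, whose argument goes through the normality/smoothness theory of generic torus orbit closures in $G/P$ rather than through the explicit ray generators. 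What your argument buys is a short, self-contained, purely linear-algebraic verification of unimodularity of the maximal cones of the permutohedral fan, which is arguably more in the spirit of how the lemma is actually used in the proof of Proposition~\ref{prop_X_is_smooth} (where the determinant of the block matrix~\eqref{eq_matrix_smooth} is reduced to determinants of matrices of the form~\eqref{eq_matrix_permutohedron}); the cost is that it is specific to type $A$ and to this particular choice of ray generators, whereas the cited result sits inside a more general framework.
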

\begin{proof}[Proof of Proposition~\ref{prop_X_is_smooth}]
	Let $\Sigma \in \mathbb{R}^{n}$ be the fan of a generic torus orbit closure $X$ in $F_m$ described in Theorem~\ref{main_thm}.
	To prove the claim, it is enough to show that every maximal cones in $\Sigma$ are smooth, i.e., the set of ray generators of each maximal cone forms a basis of $\mathbb{Z}^{n}$. 
	For a maximal cone indexed by $(S^1_{\bullet},\dots, S^m_{\bullet})$, consider the matrix whose column vectors are the corresponding ray generators: 
	\begin{equation}\label{eq_matrix_smooth}
	\left[u^1_{S^1_1} \ \cdots \ u^1_{S^1_{n_1}} \ \cdots \ u^m_{S^m_1} \ \cdots \ u^m_{S^m_{n_m}} \right]. 
	\end{equation}
	Then the matrix in~\eqref{eq_matrix_smooth} is a block lower triangular matrix whose sizes of blocks are $n_1,\dots,n_m$. Moreover, diagonal blocks has the form in~\eqref{eq_matrix_permutohedron}. Hence we have that the determinant of the matrix in~\eqref{eq_matrix_smooth} is
	\[
	\det\left(\left[u_{S^1_1} \ \cdots \ u_{S^1_{n_1}}\right]\right) \cdot
	\det\left(\left[u_{S^2_1} \ \cdots \ u_{S^2_{n_2}}\right]\right) \cdots 
	\det\left(\left[u_{S^m_1} \ \cdots \ u_{S^m_{n_m}}\right]\right) = \pm 1
	\]
	by Lemma~\ref{lemma_permutohedral_smooth}.
	Here $\{u_{S^{\ell}_1},\dots,u_{S^{\ell}_{n_{\ell}}}\}$ is the set of ray generators of the maximal cone in the fan of the permutohedral variety $X_{n_{\ell}}$ indexed by the proper chain $\emptyset \subsetneq S^{\ell}_1 \subsetneq \cdots \subsetneq S^{\ell}_{n_{\ell}} \subsetneq [n_{\ell}+1]$ for $1 \leq \ell \leq m$. Hence the non-singularity of $X$ follows.
	
	Since the ray vectors $u^{\ell}_S$ are determined independently of the choices of generic points, the fan of all
	generic torus orbit closures are identical. Therefore they are all isomorphic as smooth toric varieties.
\end{proof}
\begin{example}
	Suppose that $F_3$ is a $3$-stage flag Bott manifold defined in Example~\ref{example_F3_ray_vectors}. Consider a maximal cone $\sigma$ indexed by
	\[
	(\{2\} \subset \{2,3\}, \{2\} \subset \{1,2\}, \{2\}).
	\]
	Then the corresponding ray generators form the following matrix: 
	\[
	\left[u^1_{\{2\}} \ u^1_{\{2,3\}} \ u^2_{\{2\}} \ u^2_{\{1,2\}} \ u^3_{\{2\}}\right]
	= \begin{bmatrix}
	0 & -1 & 0 & 0 & 0 \\
	1 & 0 & 0 &0 & 0\\
	0 & x_{11} & 0 &1 &0\\
	0 & x_{21} & 1 & 1 & 0\\
	0 & y_1  & 0 & -z_2 & -1
	\end{bmatrix}.
	\]
	We have that
	\[
	\det \begin{bmatrix}
	0 & -1 & 0 & 0 & 0 \\
	1 & 0 & 0 &0 & 0\\
	0 & x_{11} & 0 &1 &0\\
	0 & x_{21} & 1 & 1 & 0\\
	0 & y_1  & 0 & -z_2 & -1
	\end{bmatrix}
	 = \det \begin{bmatrix}
	 0 & -1 \\ 1 & 0 
	 \end{bmatrix}
	 \det \begin{bmatrix}
	 0 & 1 \\ 1 &  1
	 \end{bmatrix}
	 \det
	 \begin{bmatrix}
-1
	 \end{bmatrix}
	= 1,
	\]
	so that the maximal cone $\sigma$ is smooth.
\end{example}

%%%%%%%%%%%%%%%%%%%%%%%%%%%%%%%%%%%%%%%%%%%%%%%%%%%%%%%%%%%%%%%%%%
\section{Proof of Theorem~\ref{main_thm}}
\label{sec_proof_of_main_thm}
%%%%%%%%%%%%%%%%%%%%%%%%%%%%%%%%%%%%%%%%%%%%%%%%%%%%%%%%%%%%%%%%%%

In this section we prove Theorem~\ref{main_thm} 
using the combinatorial structure of the fan of toric varieties bundles and
tangential representations around fixed points. 
To describe the fan $\Sigma$ of the torus orbit closure in $F_m$, we first recall the following on equivariant fiber bundle of toric varieties:
\begin{proposition}[{\cite[Proposition 7.3]{Oda78Torus} and \cite[\S 3.3]{CLS11}}]
	\label{prop_equivariant_bundle}
Let $\Sigma$, respectively $\Sigma'$, be a complete fan in $N_{\mathbb{R}} := N \otimes_{\mathbb Z} \mathbb{R}$, respectively $N'_{\mathbb{R}} := N' \otimes_{\mathbb Z} \mathbb{R}$, for some lattice $N$, respectively $N'$. Let $\overline{\varphi} \colon N \to N'$ be compatible with fans $\Sigma$ and $\Sigma'$, and let $N'' = \ker(\overline{\varphi})$.
Then $\varphi \colon X_{\Sigma} \to X_{\Sigma'}$ is an equivariant fiber bundle with the fiber $X_{\Sigma''}$, where $\Sigma'' = \{\sigma \in \Sigma \mid \sigma \subset N''_{\mathbb{R}} \} \subset N''_{\mathbb{R}}$, 
if and only if the following conditions are satisfied:
\begin{enumerate}
	\item $\overline{\varphi} \colon N \to N'$ is surjective.
	\item There exists a lifting $\widetilde{\Sigma}' \subset \Sigma$ of $\Sigma'$, i.e., for each $\sigma' \in \Sigma'$, there exists a unique $\widetilde{\sigma}' \in \widetilde{\Sigma}'$ such that $\overline{\varphi}$ induces a bijection
	\[
	\overline{\varphi} \colon \widetilde{\sigma}' \stackrel{\sim}{\longrightarrow} \sigma'.
	\]
	\item $\Sigma = \widetilde{\Sigma}' \bigcdot \Sigma''$, i.e., $\Sigma$ consists of cones 
	\[
	\sigma = \widetilde{\sigma}' + \sigma''
	\]
	with $\widetilde{\sigma}'$ and $\sigma''$ running through $\widetilde{\Sigma}'$ and $\Sigma''$. 
\end{enumerate}
\end{proposition}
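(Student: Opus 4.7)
The plan is to prove both implications of the biconditional by analyzing the toric morphism $\varphi : X_\Sigma \to X_{\Sigma'}$ through its restrictions to affine charts $U_\sigma = \text{Spec}\,\C[\sigma^\vee \cap M]$, where $M = \text{Hom}_\Z(N, \Z)$. The backbone of the argument is that condition~(1) yields a short exact sequence of lattices $0 \to N'' \to N \to N' \to 0$, which dualizes to $0 \to M' \to M \to M'' \to 0$ and produces a short exact sequence of tori $1 \to T_{N''} \to T_N \to T_{N'} \to 1$; this matches the abstract fiber of the candidate bundle with $X_{\Sigma''}$, on which $T_{N''}$ acts as its dense torus.

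For the sufficiency direction, I would fix $\sigma' \in \Sigma'$ with unique lift $\widetilde\sigma' \in \widetilde\Sigma'$ from~(2), and identify the cones of $\Sigma$ mapping into $\sigma'$: by~(3) these are precisely $\widetilde\tau' + \sigma''$ for $\tau'$ a face of $\sigma'$ and $\sigma'' \in \Sigma''$, with coherence of the lift on faces following from the uniqueness clause in~(2). Since $\overline\varphi$ is a bijection on $\widetilde\tau'$ and $N'' = \ker\overline\varphi$, the cone $\widetilde\tau'$ meets $N''_{\mathbb{R}}$ only at the origin, so $\widetilde\tau'$ and $\sigma''$ lie in complementary rational subspaces. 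The dual cones and semigroups decompose accordingly, giving $U_{\widetilde\tau' + \sigma''} \cong U_{\widetilde\tau'} \times U_{\sigma''}$, and gluing as $\sigma''$ ranges over $\Sigma''$ yields $\varphi^{-1}(U_{\sigma'}) \cong U_{\widetilde\sigma'} \times X_{\Sigma''} \cong U_{\sigma'} \times X_{\Sigma''}$, the last isomorphism coming from the fact that $\overline\varphi$ restricts to a bijection of cones, and hence of affine toric varieties, $\widetilde\sigma' \to \sigma'$.

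For the necessity direction, suppose $\varphi$ is an equivariant fiber bundle with fiber $X_{\Sigma''}$. Surjectivity of the bundle map forces the restriction $T_N \to T_{N'}$ of dense tori to be surjective, which gives~(1). Over each $\sigma' \in \Sigma'$, an equivariant local trivialization $\varphi^{-1}(U_{\sigma'}) \cong U_{\sigma'} \times X_{\Sigma''}$ determines a preferred sublattice of $N$ projecting isomorphically onto the ray generators of $\sigma'$; reading off the toric fan of $\varphi^{-1}(U_{\sigma'})$ through this trivialization produces a cone $\widetilde\sigma' \in \Sigma$ lifting $\sigma'$, verifying~(2), and simultaneously exhibits the cones of $\Sigma$ with image contained in $\sigma'$ as Minkowski sums $\widetilde\sigma' + \sigma''$ with $\sigma'' \in \Sigma''$, verifying~(3).

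The principal obstacle is establishing coherence of the locally constructed lifts across overlaps. For adjacent maximal cones $\sigma'_1, \sigma'_2 \in \Sigma'$ sharing a face $\tau'$, the lifts $\widetilde\sigma'_1, \widetilde\sigma'_2$ must restrict to a common lift of $\tau'$ in order for the local product decompositions to glue into a global bundle. In the sufficiency direction this compatibility is built into the hypothesis that $\widetilde\Sigma'$ is a \emph{subfan} of $\Sigma$ in~(2), so the induced transition maps on overlaps amount to rescaling toric coordinates and are therefore algebraic; in the necessity direction, the same compatibility is extracted by tracking which cones of $\Sigma$ are cut out by the preimages $\varphi^{-1}(U_{\tau'})$ of lower-dimensional affine charts, whose toric coordinates rigidly determine the lifts of faces. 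Once this combinatorial coherence is secured, both implications reduce to the semigroup-level decompositions already in hand.
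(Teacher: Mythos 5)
The paper offers no proof of this proposition---it is imported wholesale from Oda and from Cox--Little--Schenck---so there is nothing internal to compare your argument against; I can only assess it on its own terms. Your overall architecture (the exact sequence $0 \to N'' \to N \to N' \to 0$, the chart-by-chart analysis of $\varphi^{-1}(U_{\sigma'})$, and the decomposition of the dual semigroup of $\widetilde{\tau}' + \sigma''$) is the standard route and is the one CLS follow for the ``if'' direction.

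There is, however, a genuine gap at the crux of the sufficiency argument. You assert that because $\overline{\varphi}$ restricts to a bijection of \emph{cones} $\widetilde{\sigma}' \to \sigma'$ it induces an isomorphism of affine toric varieties, and that $\widetilde{\tau}'$ and $\sigma''$ ``lying in complementary rational subspaces'' suffices for the dual semigroups to split. Both inferences are false. Take $N = \Z^2$, $N' = \Z$, $\overline{\varphi}(a,b) = b$, let $\Sigma'$ be the fan of $\mathbb{P}^1$, let $\Sigma''$ be the fan of $\mathbb{P}^1$ in $N''_{\mathbb{R}} = \mathbb{R} \times \{0\}$, and take $\widetilde{\Sigma}'$ to have rays through $(1,2)$ and $(-1,-2)$. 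Conditions (1)--(3) then hold with cone-level bijections, yet $U_{\mathrm{Cone}((1,0),(1,2))} \cong \mathrm{Spec}\,\C[A,B,C]/(AB - C^2)$ is singular, so $X_{\Sigma}$ cannot be a $\mathbb{P}^1$-bundle over $\mathbb{P}^1$. What is actually required---and what Oda's hypothesis says---is that $\overline{\varphi}$ restrict to a bijection $\widetilde{\sigma}' \cap N \to \sigma' \cap N'$. From that one builds an \emph{integral} section $s \colon N' \to N$ with $s(\sigma') = \widetilde{\sigma}'$ (the inverse of $\overline{\varphi}$ on $\mathrm{span}\,\widetilde{\sigma}'$ is defined over $\Z$ precisely because of the lattice condition), obtains $N = s(N') \oplus N''$ with the cones sitting inside the respective summands, and only then does $(\widetilde{\tau}' + \sigma'')^{\vee} \cap M$ split as a product of semigroups. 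A smaller but real issue on the ``only if'' side: surjectivity of $\varphi$ only forces $T_N \to T_{N'}$ to be surjective, i.e.\ $\overline{\varphi}$ to have finite cokernel, not to be surjective on lattices; to recover (1) you must additionally use that the fiber over a point of the open orbit is the irreducible variety $X_{\Sigma''}$, which forces the kernel of $T_N \to T_{N'}$ to be connected.
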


The operation $\bigcdot$ is called \defi{join} in~\cite[III.1]{Ewald96Comb}. 
The closure of a generic torus orbit of the torus $\bf H$ in $F_m$ is an iterated permutohedral variety bundles. 
Hence we have the following lemma:
\begin{lemma}[{see~\cite[Lemma 5.11]{KLSS18}}]
	Let $F_m$ be an $m$-stage flag Bott manifold. Let $\Sigma$ be the fan of the closure of a generic torus orbit of the torus $\bf H$ in $F_m$. Then there are liftings $\widetilde{\Sigma}_{n_1},\dots,\widetilde{\Sigma}_{n_{m-1}}$ of fans of permutohedral varieties such that
	\[
	\Sigma = \widetilde{\Sigma}_{n_1} \bigcdot \cdots \bigcdot 	\widetilde{\Sigma}_{n_{m-1}} \bigcdot \Sigma_{n_m}. 
	\]
\end{lemma}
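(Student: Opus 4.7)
The plan is to induct on the height $m$, applying Proposition~\ref{prop_equivariant_bundle} at each stage. The base case $m=1$ is immediate: $F_1 = \flag(n_1+1)$, the generic torus orbit closure is the permutohedral variety $X_{n_1}$, and the (vacuous) join gives $\Sigma = \Sigma_{n_1}$. For the inductive step, the starting point is that $p_m \colon F_m \to F_{m-1}$ is an $\mathbb H$-equivariant fiber bundle with fiber $\flag(n_m+1)$. I would split the effective torus as $\mathbf H = \mathbf H' \times \mathbf H_m$, where $\mathbf H' \cong (\Cstar)^{n_1+\cdots+n_{m-1}}$ acts effectively on $F_{m-1}$ and $\mathbf H_m \cong (\Cstar)^{n_m}$ acts effectively on the fiber $\flag(n_m+1)$, and then restrict $p_m$ to the generic orbit closure $X$ to obtain a surjective map $X \to X'$, where $X' \subset F_{m-1}$ is the closure of the induced generic $\mathbf H'$-orbit.

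The next step is to show that $X \to X'$ is a toric fiber bundle with fiber the permutohedral variety $X_{n_m}$. The fiber identification uses genericity: the restriction of the $\mathbf H$-action to a fiber $\flag(n_m+1)$ over an $\mathbf H'$-fixed point of $X'$ factors through the canonical $\mathbf H_m$-action, and the closure of the $\mathbf H_m$-orbit of $g_m$ in $\flag(n_m+1)$ is the permutohedral variety $X_{n_m}$. At the level of lattices, the projection $\mathbf H \to \mathbf H'$ induces a surjection $\overline\varphi \colon N \to N'$ of cocharacter lattices whose kernel $N''$ is the cocharacter lattice of $\mathbf H_m$, and the sub-fan of $\Sigma$ supported on $N''_{\mathbb R}$ equals $\Sigma_{n_m}$. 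Proposition~\ref{prop_equivariant_bundle} then yields a lifting $\widetilde\Sigma_{n_{m-1}}$ of the fan $\Sigma'$ of $X'$ inside $\Sigma$ and the join decomposition
\[
\Sigma = \widetilde\Sigma_{n_{m-1}} \bigcdot \Sigma_{n_m}.
\]
Applying the inductive hypothesis to $X' \subset F_{m-1}$ produces a decomposition of $\Sigma'$ as a join of liftings $\widetilde\Sigma_{n_1}', \dots, \widetilde\Sigma_{n_{m-2}}'$ and $\Sigma_{n_{m-1}}$; pulling the $\widetilde\Sigma_{n_j}'$ back through $\overline\varphi$ embeds them as subfans of $\Sigma$, and associativity of the join operation then delivers the claimed decomposition.

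The main obstacle is verifying the three hypotheses of Proposition~\ref{prop_equivariant_bundle} for the restricted map $X \to X'$, and in particular producing the lifting $\widetilde\Sigma_{n_{m-1}}$. Surjectivity of $\overline\varphi$ is automatic from the product structure of $\mathbf H$, and the identification of $\Sigma''$ with $\Sigma_{n_m}$ follows once the torus orbit structure on a single generic fiber is understood. Existence of the lifting is the delicate piece: because the bundle is twisted by the matrices $\A{m}{\ell}$, the lift is not canonical, and one must choose an $\mathbf H$-fixed point of $X$ lying over a prescribed $\mathbf H'$-fixed point of $X'$ in order to split $\overline\varphi$ on each maximal cone. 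The genericity of $[g_1,\dots,g_m]$ ensures these local splittings are compatible across the torus fixed point set and assemble into a global subfan $\widetilde\Sigma_{n_{m-1}}$ of $\Sigma$, after which the join decomposition is a formal consequence of Proposition~\ref{prop_equivariant_bundle}.
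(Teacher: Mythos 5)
Your proposal is correct and follows essentially the same route as the paper, which obtains the lemma by observing that the generic orbit closure is an iterated permutohedral-variety bundle and then invoking Proposition~\ref{prop_equivariant_bundle} (deferring the details of the lifting to \cite[Lemma 5.11]{KLSS18}); your induction on $m$ simply unpacks that argument. The only blemish is a labeling slip where you call the lifting of the whole base fan $\Sigma'$ by the name $\widetilde{\Sigma}_{n_{m-1}}$ before correctly decomposing $\Sigma'$ itself and lifting each of its factors (including $\Sigma_{n_{m-1}}$) into $\Sigma$.
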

The above lemma proves that the combinatorial structure of the fan $\Sigma$ is given as in Theorem~\ref{main_thm}(2).
Now it is enough to show that the ray vectors are given as in Theorem~\ref{main_thm}(1).
To complete the proof, we use the tangential representations around fixed points which have been computed in~\cite[\S 3]{KLSS18}. 

Let $F_m$ be an $m$-stage flag Bott manifold. Recall from~\cite[Proposition 3.3]{KLSS18} that the fixed point set $F_m^{\bf H}$ can be identified with $\mathfrak{S}_{n_1+1} \times \cdots \times \mathfrak{S}_{n_m+1}$. More precisely, for permutations
$(v_1,\dots,v_m) \in \mathfrak{S}_{n_1+1} \times \cdots \times \mathfrak{S}_{n_m+1}$, the corresponding fixed point in $F_m$ is $[\dot{v}_1,\dots,\dot{v}_m]$ where $\dot{v}_j \in GL(n_j+1)$ is the column permutation matrix of $v_j$. Also we have that $F_m^{\bf H} = X^{\bf H}$.
Suppose that $\mathbb T$, respectively $\mathbf T$, is the maximal compact torus in $\mathbb H$, respectively $\mathbf H$.
Then, for every fixed point $v := [\dot{v}_1,\dots,\dot{v}_m]$, the weights of the isotropy representation $T_vF_m$ of~$\mathbb T$ are explicitly computed in~\cite[Proposition 3.5]{KLSS18}. 

We note that there is a one-to-one correspondence relation between the set $\mathfrak{S}_{n_1+1} \times \cdots \times \mathfrak{S}_{n_m+1}$ and the sequences of chains of subsets $\{(S^1_{\bullet},\dots,S^m_{\bullet})\}$ as follows. For a given $(v_1,\dots,v_m) \in \mathfrak{S}_{n_1+1} \times \cdots \times \mathfrak{S}_{n_m+1}$, we define
\begin{equation}\label{eq_def_of_A_ell_p}
S^{\ell}_{p} := \{v(n_{\ell}+2-p),\dots,v(n_{\ell}+1)\} \quad \text{ for }1\leq p\leq n_{\ell}, 1\leq \ell\leq m.
\end{equation}
Moreover, for a given maximal cone indexed by $(v_1,\dots,v_m)$, the adjacent maximal cones are indexed by permutations 
\begin{equation}\label{eq_adjacent_permutations}
(v_1,\dots,v_{j-1}, v_j \cdot s_i, v_{j+1},\dots,v_m)
\end{equation}
where $s_i$ is the transposition $(i,i+1)$ for $1 \leq i \leq n_j$ and $1 \leq j \leq m$.

For a smooth projective toric variety $X_{\Sigma}$ of complex dimension $n$, the weights of the isotropy representations around fixed points and the ray generators are closely related to each other. 
Let $\rho$ be a ray in $\Sigma(1)$ with $u_{\rho} =: u_{1}$ as its generating vector. Suppose that $\sigma = \text{Cone}(u_1,\dots,u_n)$ is a maximal cone containing $\rho$.  
Let $\tau_1,\dots,\tau_n$ be the codimension-one faces of the cone $\sigma$. More precisely, we put 
\[
\tau_j = \text{Cone}(u_1,\dots,\hat{u}_j,\dots,u_n). 
\]
Then $\tau_1$ is the unique codimension-one face of $\sigma$ which does not contain $\rho$. 
By the orbit-cone correspondence, these cones correspond to torus invariant spheres $S_1,\dots,S_n$ in $X_{\Sigma}$. Then these spheres meet at a point $p \in X_{\Sigma}$ which is exactly the fixed point corresponding $\sigma$.
Suppose that $w_1,\dots,w_n \in \Lie(\mathbf T)^{\ast}$ are weights of the isotropy representation $T_p X_{\Sigma}$, where $\mathbf T$ is the compact torus of dimension $n$. Let $H_i$ be the identity component of the kernel of the map 
	\[
	\exp(\sqrt{-1} w_i) \colon \mathbf T \to S^1.
	\] 
	Then, by reordering $w_1,\dots,w_n$ appropriately, one can see that the sphere $S_i$ is the connected component of $X_{\Sigma}^{H_i}$ containing $p$.
Moreover, we have the following:
\begin{lemma}[{\cite[Proposition 7.3.18]{BuPa15}}]
	Let $u_1 = u_{\rho}, w_1,\dots,w_n$ be as above. 
	Then we have the following relation:
	\[
	\langle w_i, u_{\rho} \rangle = 
	\begin{cases}
	1 & \text{ if } i = 1,\\
	0 & \text{ otherwise}.
	\end{cases}
	\]
\end{lemma}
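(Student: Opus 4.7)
The plan is to use smoothness of $X_\Sigma$ at $p$ to reduce everything to the standard affine chart $U_\sigma \cong \mathbb{C}^n$, where both $w_i$ and its pairing with $u_\rho$ can be read off from coordinates. Since $\sigma$ is a smooth $n$-dimensional cone, $u_1,\dots,u_n$ form a $\mathbb{Z}$-basis of the lattice $N$; write $m_1,\dots,m_n \in M$ for the dual basis, so that $\langle m_i,u_j\rangle = \delta_{ij}$ by construction. The heart of the argument is to show that, after the reordering in the hypothesis, $w_i = m_i$ for each $i$; the lemma then follows immediately from $\langle m_i,u_1\rangle = \delta_{i,1}$.

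First I work in the affine chart $U_\sigma = \operatorname{Spec} \mathbb{C}[\sigma^\vee \cap M] \cong \mathbb{C}^n$, taking coordinates $z_i := \chi^{m_i}$, so that $p$ is the origin. Extending torus multiplication to $U_\sigma$ yields the diagonal action $t \cdot (z_1,\dots,z_n) = (\chi^{m_1}(t)\, z_1,\dots,\chi^{m_n}(t)\, z_n)$, and hence the $n$ weights of the isotropy representation on $T_p X_\Sigma = T_0 \mathbb{C}^n$ are exactly $m_1,\dots,m_n$, each with multiplicity one.

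Next I match each weight with its invariant sphere. Applying the orbit-cone correspondence inside $U_\sigma$, the orbit $O_{\tau_j}$ equals $\{z_k = 0 \text{ for } k \neq j,\ z_j \in \mathbb{C}^\ast\}$, since $\tau_j^\perp \cap \sigma^\vee = \mathbb{Z}_{\geq 0} m_j$. Hence $S_j = \overline{O_{\tau_j}}$ meets $U_\sigma$ in the $z_j$-axis, the tangent line $T_p S_j$ is the $m_j$-weight subspace of $T_p X_\Sigma$, and the subtorus of $\mathbf{T}$ fixing $S_j$ pointwise has Lie algebra $\operatorname{span}_\mathbb{R}(u_k : k \neq j) = \ker(dm_j)$. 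This is precisely the identity component of $\ker(\exp \circ m_j : \mathbf{T} \to S^1)$, so under the reordering of the statement (where $H_j$ is characterized by $S_j$ being the component of $X_\Sigma^{H_j}$ through $p$) one has $w_j = m_j$, and the lemma follows.

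The only delicate point in this argument is a sign convention: one must verify that the weight of $z_j$ is $+m_j$ rather than $-m_j$, so that $\langle w_1,u_\rho\rangle = +1$ and not $-1$. This is a consequence of the standard convention under which the inclusion $\mathbf{T} \hookrightarrow U_\sigma$ is equivariant for left multiplication, so that $\chi^{m_j}$ transforms under $\mathbf{T}$ as a character of weight $+m_j$. Beyond this bookkeeping I do not anticipate any real obstacle; the content of the lemma is essentially the familiar fact that, for a smooth toric variety, the tangent weights at a torus-fixed point form the dual $\mathbb{Z}$-basis to the ray generators of the associated maximal cone.
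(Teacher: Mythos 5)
Your argument is correct. Note that the paper does not prove this lemma at all: it is quoted verbatim from Buchstaber--Panov (Proposition 7.3.18), so there is no internal proof to compare against. What you have written is the standard argument behind that citation: smoothness of $\sigma$ gives the dual bases $\{u_j\}\subset N$ and $\{m_j\}\subset M$, the chart $U_\sigma\cong\mathbb C^n$ exhibits the tangent weights at $p$ as $m_1,\dots,m_n$, and the orbit--cone correspondence identifies $S_j\cap U_\sigma$ with the $z_j$-axis, so that the weight attached to $S_j$ (equivalently, the one whose kernel component $H_j$ fixes $S_j$ pointwise) is $m_j$; then $\langle w_i,u_\rho\rangle=\langle m_i,u_1\rangle=\delta_{i1}$. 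The one point worth making fully explicit is the step you state as ``under the reordering of the statement one has $w_j=m_j$'': to know that the component of $X_\Sigma^{H_j}$ through $p$ is exactly $S_j$ and nothing larger, observe that for $k\neq j$ the character $\chi^{m_k}$ is nontrivial on $H_j=\ker(\chi^{m_j})^0$ (since $m_k\notin\mathbb{R}m_j$), which forces $z_k=0$ on the fixed locus in $U_\sigma$. With that observation included, the proof is complete, and your sign discussion correctly rules out the $-1$ alternative.
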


The above lemma implies that the ray generators are completely determined by weights of isotropy representations around fixed points. Moreover, one can see that the computation of the ray generator $u_{\rho}$ is independent of the choice of a maximal cone containing $\rho$, see~\cite[Lemma 5.13]{KLSS18}.

We choose $1 \leq \ell \leq m$ and a nonempty proper subset $S$ of $[n_{\ell}+1]$. To compute the generator $u^{\ell}_S$ of the ray $\rho^{\ell}_S$, we consider a specific maximal cone $\sigma^{\ell}_S$ contains~$\rho^{\ell}_S$.
We set $S = \{s_1 < s_2 < \cdots < s_{n_{\ell}+1-d}\}$ and 
$[n_{\ell}+1] \setminus S = \{t_1 < t_2 < \cdots < t_d\}$. 
Let $v_{\ell,S}$ be a permutation in $\mathfrak{S}_{n_{\ell}+1}$ defined to be
\[
v_{\ell,S} = (t_1 \ t_2 \ \cdots \ t_d \ s_1 \ s_2 \ \cdots \ s_{n_{\ell}+1-d}).
\]
Then we choose a maximal cone $\sigma^{\ell}_S$  indexed by  
\begin{equation}\label{eq_def_of_v_ell}
v = (v_1,\dots,v_m) := (\underbrace{e,\dots,e}_{\ell-1}, v_{\ell,S}, e,\dots,e),
\end{equation}
so that $\sigma^{\ell}_S$ contains the ray $\rho^{\ell}_S$.

We recall from~\cite[Proposition 3.5 and Theorem 3.11]{KLSS18} that the weight $w^j_i$ corresponds to the codimension-one face 
of the cone $\sigma^{\ell}_S$ intersecting the maximal cone indexed by 
$(v_1,\dots,v_{j-1}, v_j \cdot s_i, v_{j+1},\dots,v_m)$ is computed as follows:
\begin{proposition}[{\cite[Proposition 3.5 and Theorem 3.11]{KLSS18}}]\label{prop_wji}
We have $w^j_i=r_{i+1} - r_i$ where $r_i$ is the $i$th row of the matrix
\[
\left[
\X{j}{1} \ \X{j}{2} \ \cdots \ \X{j}{j-1} \ B_j \ O \ \cdots \ O
\right].
\]
Here $\X{j}{\ell}$ is the matrix of size $(n_j+1) \times (n_{\ell}+1)$ defined by 
\begin{equation}\label{eq_def_of_Xjl}
\begin{split}
X^{(j)}_{\ell} &= \sum_{\ell < i_1 < \cdots < i_r < j }
\left(B_j A^{(j)}_{i_r} \right) \left(B_{i_r} A^{(i_{r})}_{i_{r-1}} \right) \cdots 
\left(B_{i_1} A^{(i_1)}_{\ell} \right)B_{\ell}\\
& \qquad + B_j \A{j}{\ell} B_{\ell}
\end{split}
\end{equation}
for $1 \leq \ell < j \leq m$,
and $B_j$ is the row permutation matrix corresponding to $v_j$, i.e.,
$B_j = (\dot{v}_j)^T$.
\end{proposition}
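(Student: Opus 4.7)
The plan is to reduce the computation of $w^j_i$ to computing the $\mathbb{T}$-characters of the line bundles $\xi^{(j)}_k$ at the fixed point $v=[\dot v_1,\ldots,\dot v_m]$, then to unwind those characters through the tower to recognise the recursion defining $X^{(j)}_\ell$ in \eqref{eq_def_of_Xjl}.

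First, I would use the iterated fiber bundle structure to split the isotropy representation. The equivariant projections $q_j \colon F_j \to F_{j-1}$ yield a $\mathbb{T}$-equivariant direct sum decomposition
\[
T_v F_m = \bigoplus_{j=1}^m T^{\mathrm{vert}}_{v}(F_j \to F_{j-1}),
\]
and the fiber of $q_j$ over $\bar v_{j-1}$ is the full flag manifold of $\bigoplus_{k=1}^{n_j+1}(\xi^{(j)}_k)_{\bar v_{j-1}}$. At the fixed point $\dot v_j$ of this full flag manifold, the weights of the tangent space are the pairwise differences of the weights $\chi^{(j)}_k$ by which $\mathbb{T}$ acts on the line $(\xi^{(j)}_k)_{\bar v_{j-1}}$. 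Moving along the invariant sphere corresponding to the codimension-one face indexed by $(v_1,\ldots,v_j\cdot s_i,\ldots,v_m)$ lies entirely inside the $j$-th fiber, so the weight $w^j_i$ is of the form $\chi^{(j)}_{v_j(i+1)}-\chi^{(j)}_{v_j(i)}$. Arranging the $\chi^{(j)}_k$ as the rows of a length-$n$ row vector (with entries indexed by the coordinates of $\Lie(\mathbb{T})^\ast$) and left-multiplying by $B_j=(\dot v_j)^T$ precisely reorders rows so that the difference $r_{i+1}-r_i$ of consecutive rows equals $\chi^{(j)}_{v_j(i+1)}-\chi^{(j)}_{v_j(i)}$. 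Thus the problem becomes identifying the rows of the matrix $[X^{(j)}_1\ \cdots\ X^{(j)}_{j-1}\ B_j\ O\ \cdots\ O]$ with those characters.

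Second, I would compute each $\chi^{(j)}_k$ by induction on $j$. The block $B_j$ on the diagonal records the direct contribution of the canonical $H_{GL(n_j+1)}$-action on $\dot v_j$, since no twisting has been applied at the current stage. For the off-diagonal blocks, recall that $\xi^{(j)}_k$ is constructed as the line bundle associated, via $\Lambda(A^{(j)}_\ell)$ for $\ell<j$, to the characters of the coordinate tori $H_{GL(n_\ell+1)}$. Restricting to the fixed point $\bar v_{j-1}$, the character coming from stage $\ell$ is the $k$-th row of $A^{(j)}_\ell$ evaluated on the character that $\mathbb{T}$ acts on the $\ell$-th factor of the base at $\dot v_\ell$; but this character is itself not the naive diagonal one, because the defining action $\Phi_{j-1}$ has twisted the coordinates at stage $\ell$ via the upper-triangular factors $\Lambda^{(p)}_\ell(b_\ell)^{-1}$ for every $\ell<p<j$. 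Tracking this, one sees inductively that the character of the coordinate at stage $\ell$, read at the fixed point, is the $\ell$-th block column of the matrix obtained by summing over every chain $\ell<i_1<\cdots<i_r<j$ of intermediate twistings, with each twist inserting a product $B_{i_s}A^{(i_s)}_{i_{s-1}}$ and with the final transport to stage $j$ contributing $B_j A^{(j)}_{i_r}$, plus the direct term $B_j A^{(j)}_\ell B_\ell$.

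Third, I would observe that the sum just described is exactly \eqref{eq_def_of_Xjl}. The direct term $B_j A^{(j)}_\ell B_\ell$ is the single-step contribution, and each multi-step term corresponds to a particular chain of intermediate stages; the right-most $B_\ell$ reflects that the original character on stage $\ell$ is transported by $(\dot v_\ell)^T$ to the basis given by the fixed-point column indexing. Putting everything together and taking the difference of consecutive rows of the assembled row matrix gives $w^j_i=r_{i+1}-r_i$, as claimed. The main obstacle is the careful permutation bookkeeping in the inductive step: one must verify that the twist at each intermediate stage $i_s$ contributes precisely a left-multiplication by $B_{i_s}$ when read at the fixed point indexed by $v_{i_s}$, and that the order in which these factors are composed matches the left-to-right ordering in \eqref{eq_def_of_Xjl}. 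Once this is set up, the remaining steps (the base case $j=1$ and the combinatorial expansion of the recursion) are straightforward.
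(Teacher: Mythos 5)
This proposition is not proved in the paper at all: it is quoted verbatim from \cite[Proposition 3.5 and Theorem 3.11]{KLSS18}, so there is no in-paper argument to compare yours against. Your outline is nevertheless the natural (and, as far as one can tell from how the present paper uses the result, the intended) route: split $T_vF_m$ into the vertical tangent spaces of the tower $q_j\colon F_j\to F_{j-1}$, identify the weights of the $j$-th summand with pairwise differences of the characters $\chi^{(j)}_k$ of the lines $(\xi^{(j)}_k)$ at the fixed point, and then unwind the twisted action $\Phi_j$ to express $\chi^{(j)}_k$ in terms of the matrices $A^{(p)}_{\ell}$ and the permutations $B_p$.

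The caveat is that, as written, your second step asserts rather than establishes the crux. The claim that the character of the stage-$\ell$ coordinates, read at the fixed point, equals the corresponding block column of $\sum_{\ell<i_1<\cdots<i_r<j}(B_jA^{(j)}_{i_r})\cdots(B_{i_1}A^{(i_1)}_{\ell})B_{\ell}+B_jA^{(j)}_{\ell}B_{\ell}$ is exactly the content of the cited results; everything else in your sketch is routine. To close this you would need to run the induction explicitly: show that the character vector of the $\ell$-th diagonal block at the fixed point satisfies the recursion $X^{(j)}_{\ell}B_{\ell}^{-1}=\sum_{p=\ell+1}^{j-1}X^{(j)}_{p}A^{(p)}_{\ell}+A^{(j)}_{\ell}$ (the relation the paper records in \eqref{eq_Xjell_and_A}), verify that each intermediate twist $\Lambda^{(i_s)}_{i_{s-1}}(b_{i_{s-1}})^{-1}$ contributes a left factor $B_{i_s}A^{(i_s)}_{i_{s-1}}$ when evaluated at $\dot v_{i_s}$ with the signs and the order of composition matching \eqref{eq_def_of_Xjl}, and check the row-indexing convention $B_j=(\dot v_j)^T$ so that $r_{i+1}-r_i$ really is $\chi^{(j)}_{v_j(i+1)}-\chi^{(j)}_{v_j(i)}$. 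Since you explicitly defer this bookkeeping, the proposal is a correct plan but not yet a proof; for the details you would in effect be reproducing \S 3 of \cite{KLSS18}.
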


We note that the codimension-one face of the cone $\sigma^{\ell}_S$ which does not contain the ray $\rho^{\ell}_S$ corresponds to the weight $w^{\ell}_d$. Hence
to complete the proof, it is enough to show that the vector
\[
u^{\ell}_S = \begin{cases}\displaystyle
\sum_{s \in S} \varepsilon_{\ell,s}  
- \sum_{p = \ell+1}^m \sum_{k=1}^{n_{p}+1}
((\A{p}{\ell})_{k,d+1}+\cdots+(\A{p}{\ell})_{k,n_{\ell}+1})\varepsilon_{p,k}
& \text{ if } n_{\ell}+1 \notin S, \\
\displaystyle -\sum_{s \in [n_{\ell}+1] \setminus S} \varepsilon_{\ell,s}
+ \sum_{p=\ell+1}^m \sum_{k=1}^{n_p+1}((\A{p}{\ell})_{k,1} + \cdots+ (\A{p}{\ell})_{k,d})\varepsilon_{p,k} & \text{ otherwise}
\end{cases}
\]
in Theorem~\ref{main_thm} satisfies that
\begin{equation}\label{eq_wji_ulA}
\langle w^j_i, u^{\ell}_S \rangle = \begin{cases}
1 & \text{ if } j=\ell \text{ and } i = d, \\
0 & \text{ otherwise}.
\end{cases}
\end{equation}

	The weight $w^j_i$, computed in~\cite[Proposition 3.5]{KLSS18}, is an element of $\Lie(\mathbb T)^{\ast}$. Since we have
	\[
	\mathbf T = \{(t_1,\dots,t_m) \in \mathbb T \mid t_{1,n_1+1} = \cdots = t_{m,n_m+1} = 1\}
	\]
	where $t_j = (t_{j,1},\dots,t_{j,n_j+1}) \in (S^1)^{n_j+1}$, we regard 
	\[
	\ep_{1,n_1+1} = \cdots = \ep_{m,n_m+1} = 0
	\]
	to compute the pairing~\eqref{eq_wji_ulA}.

To prove the claim, we separate cases as $j < \ell$,
$j = \ell$, and $j > \ell$. 
\begin{enumerate}
	\item[\textbf{Case 1}] {$j < \ell$.}
	By Proposition~\ref{prop_wji},
	the weight vector $w^j_i \in \Lie(\mathbf T)^{\ast}$ is a linear combination of 
	{
		\def\OldComma{,}
		\catcode`\,=13
		\def,{%
			\ifmmode%
			\OldComma\discretionary{}{}{}%
			\else%
			\OldComma%
			\fi%
		}%
		$\ep_{1,1}^{\ast},\dots,\ep_{1,n_1}^{\ast},\dots,
		\ep_{j,1}^{\ast}, \dots, \ep_{j,n_j}^{\ast}$,
	}
where $\{\ep_{\ell,k}^{\ast}\}_{1 \leq k \leq n_{\ell}, 1 \leq \ell \leq m}$ is the dual of the standard basis vector.
	On the other hand,
	since $u^{\ell}_S$ is a linear combination of 
	{
		\def\OldComma{,}
		\catcode`\,=13
		\def,{%
			\ifmmode%
			\OldComma\discretionary{}{}{}%
			\else%
			\OldComma%
			\fi%
		}%
		$\ep_{\ell,1},\dots,
		\ep_{\ell,n_{\ell}},\dots,\ep_{m,1},\dots,\ep_{m, n_m}$} and $j < \ell$,
	their pairings always vanish.
	
	\item[\textbf{Case 2}] $j = \ell$.
	By Proposition~\ref{prop_wji},
	the weight vector $w^{\ell}_i \in \Lie(\mathbf T)^{\ast}$ is 
	a linear combination of 
	{
		\def\OldComma{,}
		\catcode`\,=13
		\def,{%
			\ifmmode%
			\OldComma\discretionary{}{}{}%
			\else%
			\OldComma%
			\fi%
		}%
		$\ep_{1,1}^{\ast},
		\dots,\ep_{1,n_1}^{\ast},\dots,\ep_{\ell,1}^{\ast}, \dots, 
		\ep_{\ell,n_{\ell}}^{\ast}$}. More precisely, we have that 
	\[
	w^{\ell}_i = (\ep_{\ell, v_{\ell,S}(i+1)})^{\ast}-
	(\ep_{\ell,v_{\ell,S}(i)})^{\ast} + \text{ other terms},
	\]
	where `other terms' are the terms of 
	$\ep_{p,k}^{\ast}$ for $p < \ell$ and
	$v_{\ell,S}$ is a permutation defined in \eqref{eq_def_of_v_ell}.
	Since the vector $u^{\ell}_S$ is a linear combination of 
	$\ep_{\ell,1},\dots,\ep_{\ell,n_{\ell}},\dots,
	\ep_{m,1},\dots,\ep_{m,n_m}$, we have
	\begin{equation}\label{eq_alpha_and_lambda}
	\langle
	w^{\ell}_i, u^{\ell}_S
	\rangle
	= \langle
	(\ep_{\ell, v_{\ell,S}(i+1)})^{\ast} - (\ep_{\ell,
		v_{\ell,S}(i)})^{\ast}, u^{\ell}_S
	\rangle.
	\end{equation}
	Because of the definition of permutation $v_{\ell,S}$, we have that 
	$v_{\ell,S}(i) \in S $ if and only if $i \geq d+1$. 
	Therefore for the case when $n_{\ell}+1 \notin S$, we have that
	the value $\langle (\ep_{\ell, v_{\ell,S}(i)})^{\ast}, u^{\ell}_S \rangle$ 
	equals to 
	$0$ if $i \leq d$, and $1$ otherwise.  
	Also for the case when $n_{\ell}+1 \in S$, we get that
	the pairing 
	$\langle (\ep_{\ell, v_{\ell,S}(i)})^{\ast}, u^{\ell}_S \rangle$
	is $-1$ if $i \leq d$ and $0$ otherwise. 
	
	By applying \eqref{eq_alpha_and_lambda} for 
	$n_{\ell} +1 \notin S$, we have the following:
	\[
	\langle w^{\ell}_i, u^{\ell}_S
	\rangle = \begin{cases}
	0-0 =0 & \text{ for } 1\leq i \leq d-1, \\
	1-0 = 1 & \text{ for } i = d,\\
	1-1=0& \text{ for } d+1 \leq i \leq n_{\ell}.
	\end{cases}
	\]
	Similarly, when $n_{\ell}+1 \in S$, we get 
	the following:
	\[
	\langle w^{\ell}_i, u^{\ell}_S \rangle
	= \begin{cases}
	-1-(-1) = 0 &\text{ for } 1\leq i \leq d-1, \\
	0 - (-1) = 1 & \text{ for } i = d, \\
	0-0=0 & \text{ for } d+1 \leq i  \leq n_{\ell}.
	\end{cases}
	\]
	\item[\textbf{Case 3}] $j > \ell$. 
	The matrix $\X{\ell}{j}$ in Proposition~\ref{prop_wji} is 
	\[
	\X{j}{\ell}=
	\sum_{\ell < i_1 < \cdots < i_r < j}
	\left( B_j \A{j}{i_r} \right)
	\left( B_{i_r} \A{i_r}{i_{r-1}} \right)
	\cdots
	\left( B_{i_1} \A{i_1}{\ell} \right)
	B_{\ell} + B_j \A{j}{\ell} B_{\ell}.
	\]
	Since $v_j = e$ for $j \neq \ell$, the matrix $\X{j}{\ell}$ can
	be written by
	\[
	\X{j}{\ell} =
	\left(\sum_{\ell < i_1 < \cdots < i_r < j}
	\A{j}{i_r} 
	\A{i_r}{i_{r-1}}
	\cdots
	\A{i_1}{\ell} + \A{j}{\ell}\right)
	B_{\ell}.
	\]
	Moreover, we have that
\begin{equation}\label{eq_Xjell_and_A}
\begin{split}
\X{j}{\ell}B_{\ell}^{-1} &=
\sum_{\ell < i_1 < \cdots < i_r < j}
\A{j}{i_r} 
\A{i_r}{i_{r-1}}
\cdots
\A{i_1}{\ell} + \A{j}{\ell}\\
&= 
\X{j}{j-1}\A{j-1}{\ell}
+ \cdots 
+ \X{j}{\ell+2} \A{\ell+2}{\ell} 
+ \X{j}{\ell+1} \A{\ell+1}{\ell}
+ \A{j}{\ell} \\
&= \sum_{p=\ell+1}^{j-1} \X{j}{p} \A{p}{\ell}
+ \A{j}{\ell}.
\end{split}
\end{equation}

For notational simplicity, we denote the $i$th row vector of the matrix $\X{j}{\ell}$ by $x^{(j)}_{\ell,i}$. 
Then the weight $r_i$ in Proposition~\ref{prop_wji} equals to
	\[
	x^{(j)}_{1,i} + \cdots + x^{(j)}_{j-1,i} + \ep_{j,i}^{\ast}
	\]
because $B_j$ is the identity matrix.
Hence the pairing
\begin{equation}\label{eq_pairing_X_and_A}
\begin{split}
&\left\langle r_i, \sum_{p=\ell+1}^{m} \sum_{k=1}^{n_p+1} (\A{p}{\ell})_{k,z}\ep_{p,k}  \right\rangle \\
&\qquad = \left\langle
x^{(j)}_{\ell+1,i} + \cdots + x^{(j)}_{j-1,i} + \varepsilon_{j,i}^*,
\sum_{p=\ell+1}^{m} \sum_{k=1}^{n_p+1} (\A{p}{\ell})_{k,z}\ep_{p,k} 
\right\rangle 
\end{split}
\end{equation}
is the $(i,z)$entry of the matrix $\sum_{p=\ell+1}^{j-1} \X{j}{p} \A{p}{\ell} + \A{j}{\ell}$ for $1 \leq z \leq n_{\ell}+1$.
By~\eqref{eq_Xjell_and_A}, the pairing~\eqref{eq_pairing_X_and_A} is same as the $(i,z)$entry of the matrix $\X{j}{\ell} B^{-1}_{\ell}$:
\begin{equation}\label{eq_pairing_XA_and_XB}
\left\langle
r_i,
\sum_{p=\ell+1}^{m} \sum_{k=1}^{n_p+1} (\A{p}{\ell})_{k,z}\ep_{p,k} 
\right\rangle 
= (\X{j}{\ell} B^{-1}_{\ell})_{i,z}.
\end{equation}

\begin{enumerate}
	\item[\textbf{Subcase 1}] $n_{\ell}+1 \notin S$. In this case,
we first note that
\begin{equation}\label{eq_subcase1}
\left\langle r_i, \sum_{s \in S} \ep_{\ell,s} \right\rangle =
\left\langle x^{(j)}_{\ell,i}, \sum_{s \in S} \ep_{\ell,s} \right\rangle
= \sum_{s \in S} (\X{j}{\ell})_{i,s}.
\end{equation}
Since $B_{\ell}^{-1}$ is the column permutation matrix for $v_{\ell,A}$, the right hand side of~\eqref{eq_subcase1} coincides with
\begin{equation}\label{eq_subcase1_1}
(\X{j}{\ell} B^{-1}_{\ell})_{i,d+1} + (\X{j}{\ell} B^{-1}_{\ell})_{i,d+2}+
\cdots + (\X{j}{\ell} B^{-1}_{\ell})_{i,n_{\ell}+1}.
\end{equation}
This is same as $i$th entry of the sum of $(d+1),\dots,(n_{\ell}+1)$th column vectors in $\X{j}{\ell} B_{\ell}^{-1}$. 
Hence we have the following:
\begin{align*}
\langle r_i, u^{\ell}_S \rangle 
&= \left\langle 
r_i, \sum_{s \in S} \varepsilon_{\ell,s}  
- \sum_{p = \ell+1}^m \sum_{k=1}^{n_{p}+1}
((\A{p}{\ell})_{k,d+1}+\cdots+(\A{p}{\ell})_{k,n_{\ell}+1})\varepsilon_{p,k}
\right \rangle \\
&= \sum_{s \in S} (\X{j}{\ell})_{i,s} \\
&\qquad - \left( (\X{j}{\ell} B_{\ell}^{-1})_{i,d+1} +\cdots + (\X{j}{\ell} B_{\ell}^{-1})_{i,n_{\ell}+1} \right) \quad (\text{by~}\eqref{eq_pairing_XA_and_XB} \text{ and }\eqref{eq_subcase1}) \\
&= 0 \quad (\text{by~}\eqref{eq_subcase1_1}).
\end{align*}
Since $w^j_i = r_{i+1} - r_i$, the pairing $\langle w^j_i, u^{\ell}_S \rangle$ vanishes.

\item[\textbf{Subcase 2}] $n_{\ell}+1 \in S$.
In this case,
we have 
\begin{equation}\label{eq_subcase2}
\left\langle r_i, -\sum_{s \in [n_{\ell}+1]\setminus S} \ep_{\ell,s}\right\rangle =
\left\langle x^{(j)}_{\ell,i}, -\sum_{s \in [n_{\ell}+1]\setminus S} \ep_{\ell,s} \right\rangle
= -\sum_{s \in [n_{\ell}+1]\setminus S} (\X{j}{\ell})_{i,s}.
\end{equation}
Since $B_{\ell}^{-1}$ is the column permutation matrix for $v_{\ell,A}$, the right hand side of~\eqref{eq_subcase2} coincides with
\begin{equation}\label{eq_subcase2_1}
-(\X{j}{\ell} B^{-1}_{\ell})_{i,1} - (\X{j}{\ell} B^{-1}_{\ell})_{i,2}-
\cdots - (\X{j}{\ell} B^{-1}_{\ell})_{i,d}.
\end{equation}
This is same as $i$th entry of the sum of $1$st,$\ldots,d$th column vectors in $-\X{j}{\ell} B_{\ell}^{-1}$. Therefore, using the similar argument to Subcase 1, one can see that
\begin{align*}
\langle r_i, u^{\ell}_S \rangle 
&= \left\langle 
r_i, -\sum_{s \in [n_{\ell}+1] \setminus S} \varepsilon_{\ell,s}
+ \sum_{p=\ell+1}^m \sum_{k=1}^{n_p+1}((\A{p}{\ell})_{k,1} + \cdots+ (\A{p}{\ell})_{k,d})\varepsilon_{p,k}
\right \rangle \\
&= -\sum_{s \in [n_{\ell}+1]\setminus S} (\X{j}{\ell})_{i,s} \\
&\qquad + \left( (\X{j}{\ell} B_{\ell}^{-1})_{i,1} +\cdots + (\X{j}{\ell} B_{\ell}^{-1})_{i,d} \right) \quad (\text{by~}\eqref{eq_pairing_XA_and_XB} \text{ and }\eqref{eq_subcase2}) \\
&= 0 \quad (\text{by~}\eqref{eq_subcase2_1}).
\end{align*}
Hence  the pairing $\langle w^j_i, u^{\ell}_S \rangle$ vanishes since $w^j_i = r_{i+1} - r_i$.
\end{enumerate}
\end{enumerate}
%%%%%%%%%%%%%%%%%%%%%%%%%%%%%%%%%%%%%%%%%%%%%%%%%%%%%%%%%%%%%%%%%%%%%%%
%\bibliographystyle{amsalpha}
%\bibliography{ref}

\begin{thebibliography}{GGMS87}
	
	\bibitem[BP15]{BuPa15}
	Victor~M. Buchstaber and Taras~E. Panov, \emph{Toric topology}, Mathematical
	Surveys and Monographs, vol. 204, American Mathematical Society, Providence,
	RI, 2015.
	
	\bibitem[BT82]{BoTu82}
	Raoul Bott and Loring~W. Tu, \emph{Differential forms in algebraic topology},
	Graduate Texts in Mathematics, vol.~82, Springer-Verlag, New York-Berlin,
	1982. 
	
	\bibitem[BT18]{BuTe18}
	Victor~M. Buchstaber and Svjetlana Terzi\'{c}, \emph{Toric topology of the
		complex {G}rassmann manifolds}, arXiv preprint arXiv:1802.06449v2 (2018).
	
	\bibitem[CK00]{CaKu20}
	James~B. Carrell and Alexandre Kurth, \emph{Normality of torus orbit closures
		in {$G/P$}}, J. Algebra \textbf{233} (2000), no.~1, 122--134. 
	
	\bibitem[CLS11]{CLS11}
	David~A. Cox, John~B. Little, and Henry~K. Schenck, \emph{Toric varieties},
	Graduate Studies in Mathematics, vol. 124, American Mathematical Society,
	Providence, RI, 2011. 
	
	\bibitem[Dab96]{Dabr96}
	Romuald Dabrowski, \emph{On normality of the closure of a generic torus orbit
		in {$G/P$}}, Pacific J. Math. \textbf{172} (1996), no.~2, 321--330.

	
	\bibitem[Ewa96]{Ewald96Comb}
	G\"unter Ewald, \emph{Combinatorial convexity and algebraic geometry}, Graduate
	Texts in Mathematics, vol. 168, Springer-Verlag, New York, 1996. 
	
	\bibitem[FH91]{FlHa91}
	Hermann Flaschka and Luc Haine, \emph{Torus orbits in {$G/P$}}, Pacific J.
	Math. \textbf{149} (1991), no.~2, 251--292.
	
	\bibitem[GGMS87]{GeGoMaSe87}
	I.~M. Gel{\cprime}fand, R.~M. Goresky, R.~D. MacPherson, and V.~V. Serganova,
	\emph{Combinatorial geometries, convex polyhedra, and {S}chubert cells}, Adv.
	in Math. \textbf{63} (1987), no.~3, 301--316. 
	
	\bibitem[GK94]{GrKa94}
	Michael Grossberg and Yael Karshon, \emph{Bott towers, complete integrability,
		and the extended character of representations}, Duke Math. J. \textbf{76}
	(1994), no.~1, 23--58.
	
	\bibitem[GS87]{GeSe87}
	I.~M. Gel{\cprime}fand and V.~V. Serganova, \emph{Combinatorial geometries and
		the strata of a torus on homogeneous compact manifolds}, Uspekhi Mat. Nauk
	\textbf{42} (1987), no.~2(254), 107--134, 287. 
	
	\bibitem[KLSS17]{KLSS18}
	Shintar\^{o} Kuroki, Eunjeong Lee, Jongbaek Song, and Dong~Youp Suh, \emph{Flag
		{B}ott manifolds and the toric closure of a generic orbit associated to a
		generalized {B}ott manifold}, arXiv preprint arXiv:1708.02082 (2017).
	
	\bibitem[Kly85]{Klya85}
	Aleksandr~Anatol'evich Klyachko, \emph{Orbits of a maximal torus on a flag
		space}, Functional analysis and its applications \textbf{19} (1985), no.~1,
	65--66.
	
	\bibitem[Kly95]{Klya95}
	\bysame, \emph{Toric varieties and flag varieties}, Proceedings of the Steklov
	Institute of Mathematics \textbf{208} (1995), 124--145.
	
	\bibitem[LM18]{LeeMasuda18}
	Eunjeong Lee and Mikiya Masuda, \emph{Generic torus orbit closures in
		{S}chubert varieties}, arXiv preprint arXiv:1807.02904 (2018).
	
	\bibitem[Oda78]{Oda78Torus}
	Tadao Oda, \emph{Torus embeddings and applications}, Tata Institute of
	Fundamental Research Lectures on Mathematics and Physics, vol.~57, Tata
	Institute of Fundamental Research, Bombay; by Springer-Verlag, Berlin-New
	York, 1978, Based on joint work with Katsuya Miyake. 
	
\end{thebibliography}
\providecommand{\bysame}{\leavevmode\hbox to3em{\hrulefill}\thinspace}
\providecommand{\MR}{\relax\ifhmode\unskip\space\fi MR }
% \MRhref is called by the amsart/book/proc definition of \MR.
\providecommand{\MRhref}[2]{%
	\href{http://www.ams.org/mathscinet-getitem?mr=#1}{#2}
}
\providecommand{\href}[2]{#2}

\end{document}